\definecolor{carlos-color}{rgb}{0.65,0.0,0.35}
\definecolor{ale-color}{rgb}{0.0, 0.5, 0.0}
\definecolor{cadmiumgreen}{rgb}{0.0, 0.42, 0.24}
\definecolor{burntorange}{rgb}{0.8, 0.33, 0.0}
\theoremstyle{plain}
\newtheorem{theorem}{Theorem} [section]
\newtheorem{lemma}[theorem]{Lemma}
\newtheorem{proposition}[theorem]{Proposition}
\newtheorem{corollary}[theorem]{Corollary}
\theoremstyle{definition}
\newtheorem*{teo*}{Theorem}
\theoremstyle{definition}
\newtheorem{definition}[theorem]{Definition}
\newtheorem{remark}[theorem]{Remark}
\newcommand{\esssup}{{\mathrm{ess}\sup}}
\newcommand{\cF}{\mathcal{F}}
\newcommand{\cM}{\mathcal{M}}
\newcommand{\cN}{\mathcal{N}}
\newcommand{\TT}{{\mathbb T}}
\newcommand{\cV}{\mathcal{V}}
\newcommand{\cB}{\mathcal{B}}
\newcommand{\cX}{\mathcal{X}}
\newcommand{\HH}{\mathcal{H}}
\newcommand{\KK}{\mathcal{K}}
\newcommand{\cA}{\mathcal{A}}
\newcommand{\R}{\mathbb{R}}
\newcommand{\Z}{\mathbb{Z}}
\newcommand{\N}{\mathbb{N}}
\newcommand{\CC}{\mathbb{C}}
\newcommand{\fN}{\mathcal{N}}
\newcommand{\la} {\lambda}
\newcommand{\var} {\varepsilon}
\newcommand{\hS} {\widehat{S}}
\newcommand{\supp}{{\rm supp\,}}
\newcommand{\spn}{{\rm span}}
\begin{document}
	%\nocite{*}
	
	\title{Frames by orbits of two operators that commute}
	
	\let\thefootnote\relax\footnote{2020 {\it Mathematics Subject Classification:} Primary 42C15, 47A15,  94A20.
	
	{\it Keywords:} Frame-tuples, similarity, invariant subspaces, reducing subspaces, shift operators, synthesis operator.
	
	This research  was supported by grants: UBACyT 20020170100430BA, PICT 2018-3399 (ANPCyT), PICT 2019-03968  (ANPCyT) and CONICET PIP 11220110101018.
	Diana Carbajal was supported by the European Union's programme Horizon Europe, HORIZON-MSCA-2021-PF-01, Grant
	agreement  101064206.}

	\author{A. Aguilera}
	\address{ Departamento de Matem\'atica, Universidad de Buenos Aires,
		Instituto de Matem\'atica "Luis Santal\'o" (IMAS-CONICET-UBA), Buenos Aires, Argentina}
	\email{aaguilera@dm.uba.ar}
	
	\author{C. Cabrelli}
	\address{ Departamento de Matem\'atica, Universidad de Buenos Aires,
	Instituto de Matem\'atica "Luis Santal\'o" (IMAS-CONICET-UBA), Buenos Aires, Argentina}
	\email{carlos.cabrelli@gmail.com}
	
	\author{D. Carbajal}
	\address{ Faculty of Mathematics, University of Vienna, Vienna, Austria}
	\email{diana.agustina.carbajal@univie.ac.at}
	
	\author{V. Paternostro}
	\address{ Departamento de Matem\'atica, Universidad de Buenos Aires,
		Instituto de Matem\'atica "Luis Santal\'o" (IMAS-CONICET-UBA), Buenos Aires, Argentina}
	\email{vpater@dm.uba.ar}

	\begin{abstract}
	
	Frames formed by orbits of  vectors through the iteration of a bounded operator have recently attracted considerable attention, in particular due to its applications to dynamical sampling.
In this article, we consider two commuting bounded operators acting on some separable Hilbert space $\HH$.
We completely characterize operators $T$ and $L$  with $TL=LT$ and  sets $\Phi\subset \HH$ such that the collection $\{T^k L^j \phi: k\in \Z, j\in J,  \phi \in \Phi \}$ forms a frame of $\HH$.
This is done in terms of model subspaces of  the space of square integrable functions defined on the torus and having values in some Hardy space with multiplicity. 
The operators acting on these models are the bilateral shift and the compression of the unilateral shift (acting pointwisely). 
This context includes the case when the Hilbert space $\HH$ is a subspace of $L^2(\R)$, invariant under translations along the integers, where the operator $T$ is the
translation by one and $L$ is a shift-preserving operator.

\end{abstract}

	\vspace{1cm}
	\maketitle
	\section{Introduction}

	Let $\HH$ be a separable Hilbert space, and $\{g_i\}_{i\in I} $ a countable set in  $\HH$. In sampling theory, every function $f$ belonging to $\HH$
	needs to be recovered from its samples $\{ \langle f,g_i \rangle \}_{i \in I }$. If we impose a stability condition on the reconstruction, this problem is equivalent to the fact that
	the set $\{g_i\}_{i \in I}$ is a frame of $\HH$. 
			
	In dynamical sampling, it is assumed that the signal evolves in time under an evolution operator $L$ defined on $\HH$ and that we are able 
	to sample the functions $L^t f$ for $t = 0,1,2,...,$ where $L^t f$  represents the \emph{evolved signal} at time $t$.
	Thus, if the samples $\{ \langle f,g_i \rangle \}_{i \in I }$ are insufficient to reconstruct every  function $f$ in the space,
	we hope  to compensate the sparse data by sampling the evolved signal as well,
	that is, considering space-time samples.
	
	In other words, the problem consists in trying
	to recover any function $f$ from the samples $\{ \langle L^t f,g_i \rangle \}_{i \in I,  t=0,1,2,...}$ in a stable way,
	which is equivalent to the fact that $\{(L^*)^t g_i\}_{i,t}$ is a frame of $\HH$. 
	This motivates the study of the structure of orbits of operators in Hilbert spaces, i.e. to give conditions on a Hilbert space $\HH$, a bounded operator $L$ defined on $\HH$ and a set of functions $\Phi \subset \HH$ in order that the system $\{L^j\varphi : j \in J, \varphi \in \Phi\}$ forms a frame of $\HH,$ for some  index set $J.$ 	
	
	The dynamical sampling problem was first presented in \cite{ADK13, ADK15}, taking inspiration from the research of Lu, Vetterli, and their colleagues in \cite{HRV10, LV09, RCLV09}. In \cite{LV09}, the authors analyzed a linear diffusion (heat) equation where the initial state evolves over time. They proposed a trade-off between spatial and temporal sampling, with the goal of reconstructing the initial state using only a few spatial sensors (samples) and compensating with temporal samples. This can be considered as an inverse problem in differential equations and has potential applications in various fields, such as weather forecasting, ecology, and medical imaging between others.

	Recently, there has been a lot of interest in this problem. See \cite{ACCMP17,ACMT14,ADK13,ADK15,AKT18,AP17,CMPP17,CMPP18,CHP,Phi17} for foundations and theoretical developments, and  \cite{AHKL17,AK16,RH21,CT21,HNT21,MMO21,Tan17}
	for applications.	

	In \cite{ACCP21},  dynamical sampling was considered for shift-invariant spaces:
	Let $V \subset L^2(\R)$ be a closed subspace, $T$ the translation operator by  one on $V$, i.e. $T f(x) =f(x-1)$,
	and $L:V\rightarrow V$ a bounded operator. We assume that $TV=V$ (that is, $V$ is {\em shift invariant}) and that $TL=LT$ (that is, $L$ is {\em shift preserving}). The main question  here is  to characterize  $L$ and  $\Phi\subset V$ such that   $\{L^j\varphi: j\in J, \varphi \in \Phi\}$ is a set of  frame generators by integer translations of $V$, which means that 
	 $\{T^k(L^j \varphi): k\in \Z, j\in J,  \varphi \in \Phi \}$ forms a frame of $V.$
	
	For the case where $L$ is normal and $V$ is finitely generated, necessary and sufficient conditions were given in \cite{ACCP21}. The conditions were obtained there defining  a special 	diagonalization for $L$, carefully developed in \cite{ACCP-ADV}, and then applying a result for the finite dimensional dynamical sampling problem.
	The approach works for $L^2(\R^d)$ and even when replacing $\R^d$ with an LCA group.

	In this paper we study a generalization of this case. We consider the set of orbits $\mathcal{F}=\{T^k L^j w_i: k\in \Z, j\in J, i\in I\}$  where $\HH$ is an abstract  separable Hilbert space, $T$ and $L$ are commuting bounded operators acting on $\HH$, and $\{w_i\}_{i\in I}$ is an at most countable set of vectors in $\HH.$
	We do not assume that $T$ is unitary  or that  $L$ is normal. We do assume, however, that $T$ is invertible. Then, we ask whether the set $\mathcal{F}$ forms a frame of $\HH$. 
	
	With this purpose, we define an equivalence relation between tuples of the form $(\HH,T,L,\{w_i\}_{i\in I})$. We say that two tuples are {\it similar} if the operators from one tuple are similar to the operators in the other via a bounded isomorphism between the Hilbert spaces involved. The isomorphism also maps the set of vectors of one into the set of vector of the other. See Section \ref{sec:basic-frame-tuples} for a precise definition.

	Our main result asserts that every tuple which generates a frame is similar to a  tuple of a particular class, the class of {\it basic} tuples.
	See Definition \ref{b-t-unilateral-def}.
	These basic tuples are simpler to study because of their structure and they provide a rich platform to prove properties of these systems. In this way, properties preserved by similarity
	can be obtained on a basic tuple and then the results can be  translated to the abstract setting of the general Hilbert space.
	
	The Hilbert space component of a basic tuple is a closed subspace of $L^2(\TT,H^2_{\KK})$ where $\TT$ denote the unit circle, $\KK$ a separable  Hilbert space, and $H^2_{\KK}$  the Hardy space of $\KK$-valued functions defined on the unit circle. See Section \ref{sec:famework}.
	
	The main idea comes from the observation that the kernel of the synthesis operator of the system $\mathcal{F}$
	can be seen as a closed  subspace $\mathcal{M}$ of  $L^2(\TT,H^2_{\KK})$ that turns out  to be \textit{reducing for $U$} (the bilateral shift) and also \textit{invariant for $\widehat{S}$}, an operator
	that acts pointwisely as the unilateral shift in $H^2_{\KK}$. 
	We then provide a functional representation for all the systems $\mathcal{F}$ that form a frame, in terms of the model space $\mathcal{N}_{\mathcal{M}}=L^2(\TT,H^2_{\KK}) \ominus \mathcal{M}$, with the bilateral shift
	and the compression of $\widehat{S}$ on $\mathcal{N}_{\mathcal{M}}$ as the  commuting operators.

We want to remark that our approach was used previously in \cite{CHP} for the case of  just one operator $L$  acting on $\HH$ on one function, and in \cite{CMS22} for several functions.

Next we consider the following question: assume that $\HH$, $T$ and $L$ are given and satisfy the hypothesis of boundedness and commutativity. Which are {\it all vectors}  $w \in \HH$ such that the iterations form a frame?
We were able to answer this question completely and the characterization is similar to the one obtained before in \cite{CHP} for the iteration of only one operator. However, in our case,  the proof is much more involved  and strongly uses the Helson's characterization of reducing subspaces for the bilateral shift. We finally show that this characterization does not work when we consider iterations of multiple generators.

All the results are proved for the two cases: first when we consider forward iterations of the operator $L$, and second, when we allow negative powers of $L$, in which case, $L$ is invertible. The difference between  these cases is that the corresponding basic tuples live in different vector-valued $L^2$-spaces.

The paper is organized as follows. We first introduce in Section \ref{sec:preliminaries} the notation and  the setting that we need in the paper.  In Section \ref{model_space} we study the properties of basic tuples and prove the characterization of tuples that give frames,  splitting our analysis into two cases: \textit{unilateral tuples}, that is,  when we consider forward iterations of the operator $L$, and \textit{bilateral tuples} when  we   allow integer iterations of $L$. 
Finally, in Section \ref{sec:orbits-single-vector} we consider the problem of characterization of  all vectors that give frames by iterations.

	\section{preliminaries}\label{sec:preliminaries}
		
	Let us begin by introducing some notation and definitions. All the Hilbert spaces considered in this paper are separable.
	We will use the letters $\KK$ and $\HH$ to denote complex  Hilbert spaces. As usual, $\cB(\HH,\KK)$ will denote the set of linear bounded operators from $\HH$ into $\KK$ and $\cB(\HH):=\cB(\HH,\HH)$. We will write $\N_0:= \N\cup \{0\}$ and $\TT$ for the complex unit circle. 
	%If $E$ is a  measurable subset of $\TT$, we will denote by $|E|$ its Lebesgue measure normalized such that $|\TT|=1$.
	For a closed subspace $\cN$ of $\HH$, we will write $\cN^{\perp}$ to denote the orthogonal complement of $\cN$ in $\HH$ and $P_{\mathcal N}$ to denote the orthogonal projection of $\HH$ onto $\mathcal N$.
	
	%\subsection{Invariant and reducing subspaces} 
	Let $\cM$ be a closed subspace of a Hilbert space $\HH$ and $A \in \cB(\HH)$. The subspace $\cM$ is said to be {\it invariant} for $A$ (or $A$-invariant) if $A(\cM)\subseteq \cM$. Furthermore, $\cM$ is called {\it reducing} for $A$ (or $A$-reducing) if $\cM$ and $\cM^\perp$ are $A$-invariant, which is equivalent to say that $\cM$ is invariant for $A$ and $A^*$.
	It is easy to check that $\cM$ is $A$-invariant if and only if $P_{\cM} A P_{\cM} = A P_{\cM}$ and $\cM$ is $A$-reducing if and only if $P_{\cM} A = AP_{\cM}$.
	
	Given $T\in \cB(\HH)$ and a closed subspace $\cM$ of $\HH$, the operator defined by $P_{\cM}T|_{\cM}$ is called the {\it compression} of $T$ to the subspace $\cM$.
	
	Particular attention has been put over the years in the study of invariant and reducing subspaces under the action of sequential bilateral and unilateral shift operators, that is
	\begin{alignat}{2}
		U:\{\dots,a_{-1},(a_0),a_1,\dots\}&\mapsto \{\dots,a_{-2},(a_{-1}),a_0,\dots\}, \quad &&\{a_i\}_{i\in\Z}\subseteq\ell^2(\Z),\\
		S:\{a_0,a_1,a_2,\dots\}&\mapsto \{0,a_{0},a_{1},a_2\dots\}, &&\{a_i\}_{i\in\N}\subseteq\ell^2(\N).
	\end{alignat}
	Via the Fourier transform, these can be naturally represented as operators acting over the functional spaces $L^2(\TT)$ and the Hardy space $H^2$, i.e., the subspace of $L^{2}(\TT)$ consisting of all $f\in L^{2}(\TT)$ whose Fourier coefficients vanish for $n<0$, i.e., 
		\begin{equation}
			H^{2} := \left\{ f\in L^{2}(\TT) : \int_{\TT} f(z)z^{-n}\,dz=0 \text{ for } n<0 \right\}.
		\end{equation}

Similarly, shift operators can be defined with multiplicity higher than $1$. This means that instead of shifting a sequence of complex numbers, the operators shift a sequence with values in a Hilbert space $\KK$, where $\dim(\KK)$ is the multiplicity. The representation of these operators over functional spaces requires the introduction of vector-valued functions.

%\vicky{If $\cN\subseteq \HH$ and $\cM\subseteq \KK$ are closed subspaces and there is some $\Phi\in\cB(\HH, \KK)$ such that $\Phi|_\cN:\cN\to\cM$ is an isomorphism, we then write $\cN\simeq_\Phi\cM$. }		
	
	\subsection{Vector-valued functions and Hardy spaces with multiplicity}\label{sec:famework}
	%\subsection{Vector-valued  $L^2$ and $H^2$ spaces}
	Let  $(\Omega, \mu)$ be a $\sigma$-finite measure space such that $L^{2}(\Omega):=L^{2}(\Omega,\mu)$ is a separable Hilbert space. For our purposes along this paper, we will be interested in the cases when $\Omega = \TT$ or $\TT^{2}$ and $\mu$ is the normalized Lebesgue measure ($\mu(\TT)=1$ or $\mu(\TT^2)=1$, respectively). 
	
	Let $\KK$ be a separable Hilbert space. A vector-valued (or $\KK$-valued) function $f:\Omega\rightarrow \KK$ is said to be measurable if for each $x\in\KK$, the complex-valued function $\omega\mapsto \langle f(\omega),x \rangle_{\KK}$ is measurable on $\Omega$. We denote by $L^{2}(\Omega,\KK)$ the space of all measurable $\KK$-valued functions $f$ such that $\int_{\Omega}\|f(\omega)\|_{\KK}^{2}\,d\omega<\infty$, which is a Hilbert space equipped with the inner product 
	\begin{equation}
		\langle f,g \rangle = \int_{\Omega} \langle f(\omega), g(\omega) \rangle_{\KK}\,d\omega, \qquad f,g\in L^{2}(\Omega,\KK).
	\end{equation}
	
	%\vicky{The  space $L^{2}(\TT,\KK)$ is a particular case of vector-valued Hilbert spaces $L^2((\Omega,\mu) , \KK) $ with  .    }\carlos{( no me parece necesaria, nunca lo usamos otra que para $\TT$ ) }\vicky{lo puse porque en la sec de $L$ inversible usamos $\TT^2$}
	
	Notice that the space $\KK$ is naturally embedded in $L^2(\Omega,\KK)$ when identified with the subset of constant vector-valued functions of $L^{2}(\Omega, \KK)$. That is, given $x\in \KK$ define $\tilde{x}:\Omega\to \KK$ as the function in $L^{2}(\Omega, \KK)$ which is constantly $x$. For the sake of simplicity, we will write $x$ instead of $\tilde{x}$, when it is clear from the context that we mean the constant function.	
	%and $\KK=H^{2}_{\ell^{2}(I)}$ or simply $\ell^{2}(I)$.
	Given an orthonormal basis $\mathcal{B} = \{\var_{i}\}_{i\in I}$ of $\KK$ and a vector-valued function $f\in L^{2}(\TT,\KK)$, we can write
	\begin{equation}
		f(\la) = \sum_{i\in I} \langle f(\la),\var_{i} \rangle_{\KK} \,\var_{i}, \quad \text{a.e. }\la\in\TT.
	\end{equation}
	We call $f_{i} := \langle f(\cdot),\var_{i} \rangle_{\KK}$ the {\it ith coordinate function} of $f$ with respect to $\cB$.
	%From the definition of the norm of $L^{2}(\Omega, \KK)$, 
	It is easily seen that $f_i$  belongs to $L^{2}(\TT)$ for every $i\in I$. 
	
	%Indeed, $f_{i}:\TT\to \CC$ is measurable since so is $f:\TT\rightarrow \KK$. On the other hand, $|\langle f(\la),\var_{i} \rangle_{\KK}| \leq  \|f(\la) \|_{\KK} \|\var_{i} \|_{\KK} = \|f(\la) \|_{\KK} $ a.e. $\la\in\TT$ and thus we have that
	%\begin{equation}
	%	\|f_{i}\|_{L^{2}(\TT)}^{2} = \int_{\TT} | f_{i}(\la)|^{2}\,d\la= \int_{\TT} |\langle f(\la),\var_{i} \rangle_{\KK}|^{2} \,d\la
	%	\leq \int_{\TT}  \|f(\la) \|_{\KK}^{2} \,dz = \|f\|
	%	< \infty.
	%\end{equation}

	The {\it Hardy space with multiplicity $\alpha=\dim(\KK)$} is defined as the closed subspace of $L^{2}(\TT,\KK)$ consisting of all functions $f\in L^{2}(\TT,\KK)$ whose coordinate functions $f_{i}$ belong to the Hardy space $H^{2}$. Note that this is equivalent to say that $H^{2}_{\KK}$ is the set of all functions $f\in L^{2}(\TT,\KK)$ such that $\langle f(\cdot),u \rangle_{\KK} \in H^{2}$ for every $u\in \KK$. We will denote it by $H^{2}_{\KK}:=H^{2}(\TT,\KK)$.
	
	We are ready to formally define the shift operators with multiplicity acting over these spaces.
		\begin{definition}\label{def:U}
		The operator $U:L^{2}(\TT,\KK)\to L^{2}(\TT,\KK)$ defined by 
		\begin{equation}%\label{eq:def_U}
			(U f)(\la) = \la f(\la),\quad  \text{a.e. }\la\in\TT,\,f\in L^{2}(\TT,\KK),
		\end{equation}
		is called the {\it bilateral shift on $L^2(\TT,\KK)$} with multiplicity $\alpha=\dim(\KK)$.
	\end{definition}
	
	\begin{definition}\label{def:S}
		The operator $S: H^{2}_{\KK} \rightarrow H^{2}_{\KK}$ given by the restriction of $U$ to $H^{2}_{\KK}$  is called the \textit{unilateral shift on  $H^{2}_{\KK}$} with multiplicity $\alpha=\dim(\KK)$.
	\end{definition}

	The bilateral shift $U$ is unitary and its adjoint operator is given by $(U^{*} f )(\la) = \overline{\la} f(\la)$, for a.e. $\la\in\TT$ and $f\in L^{2}(\TT,\KK)$.			
	Moreover, since $H^{2}_{\KK}$ is invariant under $U$, then the operator $S$ is an isometry. 
	
	%\subsubsection{Bases of vector-valued functions}\label{sec:vector-valued-bases}
	
	There is a simple way to construct an orthonormal basis of $L^2(\TT,\KK)$ or $H^2_\KK$ through iterations of the shift operators over an orthonormal basis of $\KK$ that we will sketch out now.
	%Given $a\in\KK$   denote by $\tilde{a}$  the vector-valued function  $\tilde{a}:\TT\to\KK$ which is constantly $a$. 
	Let $\cB=\{\var_i\}_{i\in I}$ be an orthonormal basis of $\KK$. It is easy to see that $\{ U^k{\var}_i\,:\,k\in\Z,\, i\in I\}$ is an orthonormal basis of $L^2(\TT,\KK)$. 
	Indeed, the system is orthogonal since for every $k,k'\in\Z$ and $i,i'\in I$
	\begin{equation}
		\langle U^k{\var}_i, U^{k'}{\var}_{i'}\rangle = \int_\TT \langle \la^k\var_i, \la^{k'}\var_{i'} \rangle_\KK \,d\la = \langle \var_i, \var_{i'}\rangle_{\KK}\int_{\TT} \la^{k-k'}\,d\la = \delta_{i,i'} \delta_{k,k'}.
	\end{equation}
	For the completeness, note that if  $f\in L^2(\TT,\KK)$ and $f$ is orthogonal to each element in the basis $\cB$, we have
	\begin{equation}
		0=\langle f, U^k \var_i\rangle=\int_\TT \langle f(\la),\var_i\rangle_\KK\la^{-k}\,d\la,\quad\forall k\in\Z,\,\forall i\in I.
	\end{equation}
	Thus, the coordinate functions of $f$ respect to the basis $\cB$, $f_i(\la)= \langle f(\la),\var_i\rangle_\KK$, are zero for a.e. $\la\in\TT$, implying that $f=0$. 
	
	Analogously, we can see that the system $\{S^j{\var}_i\,:\,j\in\N_0,\, i\in I\}$ is an orthonormal basis of~$H^2_\KK$.

\section{Systems of iterations}\label{model_space} 

%In this section we provide a characterization of all systems formed by a Hilbert space $\HH$, two commuting  operators $T$ and $L$ and  a family of vectors $\{w_i\}$ such that
%the iterations $ \left\{ T^{k}L^{j} w_{i}: k\in \Z, j\in J,i\in I\right\}$ form a frame of $\HH.$  We give a representation of these systems in  a kind of {\it universal or basic} systems  given by a class of model spaces, with two shift operators acting on them and a collection of elements in these  model spaces. 
%We show that  every system is similar to a basic system in a sense that will be described below.
%These universal systems have much more structure, in fact the Hilbert space is an $L^2$ space, and properties invariant by similarity can be
%proved here and then translated to the abstract systems.
		
% In this section we provide a characterization of all systems formed by a Hilbert space $\HH$, two commuting  operators $T$ and $L$ and  a family of vectors $\{w_i\}_{i\in I}$ such that
%the iterations $ \left\{ T^{k}L^{j} w_{i}: k\in \Z, j\in J,i\in I\right\}$ form a frame of $\HH.$   We will see that these systems are associated to a particular class of systems of iterations that we will call {\it basic}, which  have much more structure and  consist of two shift operators acting on a vector-valued $L^2$-subspace. 
%
%	To start, let us consider the family of all tuples $(\HH,T,L,\{w_i\}_{i\in I})$ where
%$\HH$ is a Hilbert space, $T,L\in \cB(\HH)$ with $T$ invertible, $TL=LT$, $I$ is an at most countable index set, and
% $\{w_i\}_{i\in I}\subset \HH$ is a collection of vectors.	
 
 	Let $\HH$ be a separable Hilbert space, $T,L\in \cB(\HH)$ such that $T$ is invertible and $TL=LT$, and $\{w_i\}_{i\in I}\subset \HH$ an at most countable collection of vectors.	
 	In this section, we provide a characterization of all systems of iterations $\left\{ T^{k}L^{j} w_{i}: k\in \Z, j\in J,i\in I\right\}$ that form a frame, Parseval frame or Riesz basis of $\HH$. We will see that these systems are associated to a particular class of systems of iterations called {\it basic}, which have much more structure and consist of two shift operators acting on a vector-valued $L^2$-subspace.
 	
 	For this purpose, let us consider the family of all tuples $(\HH,T,L,\{w_i\}_{i\in I})$ with $\HH,T,L$ and $\{w_i\}_{i\in I}$ as above. 
 	We will say that two tuples  $(\HH_1,T_1, L_1,\{v_i\}_{i\in I})$ and $(\HH_2,T_2,L_2,\{w_i\}_{i\in I})$ are {\it similar} if there exists a bounded isomorphism
 $C: \HH_1\rightarrow \HH_2$ that satisfies 
 $C(v_i)=w_i$ for every $i\in I$, and has the following intertwining properties: $$ T_2C=CT_1\quad\textrm{and}\quad L_2C=CL_1.$$
 It can be easily seen that the similarity is an equivalence relation. 
% When $C$ is also isometric, we will say that $(\HH_1,T_1, L_1,\{v_i\}_{i\in I})$ and $(\HH_2,T_2,L_2,\{w_i\}_{i\in I})$ are {\it unitarily equivalent}.
 Moreover, two similar tuples are said to be {\it unitarily equivalent}
 if the associate isomorphism $C$ is unitary. 
 
Furthermore, we will say that a tuple $(\HH,T,L,\{w_i\}_{i\in I})$  is a {\it frame-tuple} (\textit{Parseval-tuple} or \textit{Riesz-tuple}) if the collection 
 \begin{equation}
		 \left\{ T^{k}L^{j} w_{i}: k\in \Z, j\in J,i\in I\right\}
	\end{equation}
	forms a frame (a Parseval frame or a Riesz basis, respectively) of $\HH.$ In this case, the vectors  $\{w_{i}\}_{i \in I}$ will be called \textit{generators}. The index set $J$ will be $\N_0$ or $\Z$. 
	
	When the iterations of $L$ are taken over $\N_{0}$, we call the tuple $(\HH,T,L,\{w_i\}_{i\in I})$ a \textit{unilateral tuple}. Observe that when the iterations are taken over $\Z$, the operator $L$ must be invertible. In that case we call $(\HH,T,L,\{w_i\}_{i\in I})$ a \textit{bilateral tuple}.
	%In what follows we only consider tuples such that the system \eqref{eq:frame_iterations} is Bessel.
	
	The following two lemmas hold regardless of the type of tuple (unilateral or bilateral). 
	\begin{lemma}\label{similar_frames}
		The similarity relation between tuples preserves the frame property.
	\end{lemma}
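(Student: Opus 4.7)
The plan is to reduce the statement to two standard facts: that the intertwining relations propagate to all iterates, and that a bounded isomorphism between Hilbert spaces carries frames to frames.

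First, I would suppose that $(\HH_1,T_1,L_1,\{v_i\}_{i\in I})$ is a frame-tuple and that $C:\HH_1\to\HH_2$ is the bounded isomorphism realizing the similarity, i.e.\ $C v_i = w_i$, $T_2 C = C T_1$, and $L_2 C = C L_1$. A short induction on $k\in\N_0$ gives $T_2^k C = C T_1^k$, and since $T_1,T_2$ are invertible (as required in the definition of a tuple), applying $T_2^{-1}$ on the left and $T_1^{-1}$ on the right to $T_2 C = C T_1$ yields $C T_1^{-1} = T_2^{-1} C$, so $T_2^k C = C T_1^k$ for every $k\in\Z$. The same induction gives $L_2^j C = C L_1^j$ for $j\in\N_0$; in the bilateral case where $L_1,L_2$ are invertible, one extends to $j\in\Z$ in the same way. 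Combining these and using that $T_2$ and $L_2$ commute (as do $T_1$ and $L_1$) yields
\begin{equation}
T_2^k L_2^j w_i \;=\; T_2^k L_2^j C v_i \;=\; C\bigl(T_1^k L_1^j v_i\bigr), \qquad k\in\Z,\ j\in J,\ i\in I.
\end{equation}

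Next I would invoke the elementary fact that a bounded isomorphism preserves the frame property. Explicitly, for any $g\in\HH_2$,
\begin{equation}
\sum_{k,j,i} \bigl|\langle g,\, C(T_1^k L_1^j v_i)\rangle\bigr|^2 \;=\; \sum_{k,j,i} \bigl|\langle C^* g,\, T_1^k L_1^j v_i\rangle\bigr|^2,
\end{equation}
and since $\{T_1^k L_1^j v_i\}$ is a frame for $\HH_1$ with bounds $A,B>0$, this sum lies between $A\|C^* g\|^2$ and $B\|C^* g\|^2$. Because $C$ is a bounded isomorphism, so is $C^*$, so $\|C^* g\|$ is comparable to $\|g\|$: explicitly $\|g\|/\|(C^*)^{-1}\| \le \|C^* g\| \le \|C^*\|\,\|g\|$. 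This gives frame bounds $A/\|(C^*)^{-1}\|^2$ and $B\|C^*\|^2$ for the system $\{T_2^k L_2^j w_i\}$ in $\HH_2$.

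There is essentially no obstacle here; the only point requiring a small amount of care is the handling of negative powers in the bilateral case, where one must verify that invertibility of $L_1,L_2$ together with $L_2 C = C L_1$ really does give $L_2^{-1}C = C L_1^{-1}$, and the commutativity hypothesis $T_\ell L_\ell = L_\ell T_\ell$ is used to rearrange $T_\ell^k L_\ell^j$ freely. The same argument also shows that similarity preserves Bessel bounds and (with $C$ unitary) Parseval frames, and preserves the Riesz basis property, since $\{T_1^k L_1^j v_i\}$ is a Riesz basis iff it is an exact frame, a property carried over by the isomorphism $C$.
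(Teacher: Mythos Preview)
Your proposal is correct and follows essentially the same route as the paper: both arguments reduce to the identity $T_2^kL_2^jw_i=C(T_1^kL_1^jv_i)$ (the paper writes this as $T_2^kL_2^j=CT_1^kC^{-1}\,CL_1^jC^{-1}$) and then pass the inner product to $C^*g$ to invoke the frame inequalities for the first system together with the boundedness of $C^*$ and $(C^*)^{-1}$. You are simply more explicit than the paper about the induction on iterates and the handling of negative powers; the final paragraph on Bessel/Parseval/Riesz is extra and not required for this lemma.
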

	
	\begin{proof}
		Assume that the tuples $(\HH_1,T_1,L_1,\{v_i\}_{i\in I}) \text{ and } (\HH_2,T_2,L_2,\{w_i\}_{i\in I})$ are similar via the isomorphism $C\in\cB(\HH_1,\HH_2).$
		Let $g\in \HH_1$. It is sufficient to observe that
		\begin{align*}
			\sum_{k,j,i} |\langle g, T_2^{k}L_2^{j} w_{i} \rangle|^2
			&= \sum_{k,j,i} |\langle g, (CT_1^{k}C^{-1})(CL_1^{j}C^{-1})C v_{i} \rangle|^2 \\ 
			&= \sum_{k,j,i} |\langle C^{*}g, T_1^{k}L_1^{j} v_{i} \rangle|^2		
		\end{align*}
		where the sum is indexed over $k\in\Z,\,j\in J$, where $J=\N_0$ or $\Z$ depending on the case (unilateral or bilateral), and $i\in I$. Since $C$ is a bounded isomorphism, 
		$(\HH_1,T_1,L_1,\{v_i\}_{i\in I})$ is a frame tuple if and only if so is $(\HH_2,T_2,L_2,\{w_i\}_{i\in I})$. 
	\end{proof}

\begin{lemma}\label{lem:similar-parseval-unitarilyequivalent}
		If two Parseval-tuples are similar, then they are unitarily equivalent.
	\end{lemma}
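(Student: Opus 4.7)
The plan is to reuse the computation from the proof of Lemma \ref{similar_frames} and then exploit the Parseval hypothesis to show that the intertwining isomorphism $C$ is itself the unitary witness.

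Concretely, suppose $(\HH_1,T_1,L_1,\{v_i\}_{i\in I})$ and $(\HH_2,T_2,L_2,\{w_i\}_{i\in I})$ are two similar Parseval-tuples, and let $C\in\cB(\HH_1,\HH_2)$ be the bounded isomorphism realizing the similarity. First, I would repeat verbatim the chain of equalities in the proof of Lemma \ref{similar_frames}, but applied to an arbitrary $g\in\HH_2$. This yields the identity
\begin{equation}
\sum_{k,j,i} |\langle g, T_2^{k}L_2^{j} w_{i} \rangle|^2 \;=\; \sum_{k,j,i} |\langle C^{*}g, T_1^{k}L_1^{j} v_{i} \rangle|^2,
\end{equation}
valid for every $g\in\HH_2$ (with $k\in\Z$, $j\in J$, $i\in I$).

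Next I would invoke the Parseval assumption on both tuples. The left-hand side equals $\|g\|_{\HH_2}^2$ because $\{T_2^kL_2^j w_i\}$ is a Parseval frame of $\HH_2$, and the right-hand side equals $\|C^*g\|_{\HH_1}^2$ because $\{T_1^kL_1^j v_i\}$ is a Parseval frame of $\HH_1$. Consequently,
\begin{equation}
\|C^{*}g\|_{\HH_1} \;=\; \|g\|_{\HH_2}, \qquad \forall\, g\in\HH_2,
\end{equation}
so $C^*$ is an isometry.

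Finally, since $C\in\cB(\HH_1,\HH_2)$ is a bounded isomorphism, so is $C^*\in\cB(\HH_2,\HH_1)$; in particular $C^*$ is surjective. A surjective isometry between Hilbert spaces is unitary, hence $C^*$ is unitary and therefore $C$ itself is unitary. Because $C$ was already the intertwining isomorphism satisfying $T_2C=CT_1$, $L_2C=CL_1$ and $Cv_i=w_i$, this exhibits the unitary equivalence of the two tuples. There is no genuine obstacle here; the only point worth stressing is the observation that, thanks to $C$ already being invertible, no additional work (e.g.\ polar decomposition) is needed to upgrade the isometry $C^*$ to a unitary.
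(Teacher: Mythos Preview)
Your proof is correct and follows essentially the same approach as the paper's own argument: both use the Parseval identities on each side to show that an adjoint of the intertwining isomorphism is an isometry, and then invoke invertibility (hence surjectivity) to conclude unitarity. The only cosmetic difference is that the paper first isolates the abstract fact ``an isomorphism mapping a Parseval frame to a Parseval frame is unitary'' and proves it by showing $(C^{-1})^{*}$ is an isometry, whereas you reuse the computation of Lemma~\ref{similar_frames} directly to show $C^{*}$ is an isometry.
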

	\begin{proof}
		Let $(\HH_1,T_1,L_1,\{v_i\}_{i\in I})$ and $(\HH_2,T_2,L_2,\{w_i\}_{i\in I})$ be two similar Parseval-tuples via the isomorphism $C\in\cB(\HH_1,\HH_2)$. Then, by the intertwining properties of $C$, we have that
		$C(T_1^kL_1^jv_i) = T_2^kL_2^jw_i$, for every $k\in\Z$, $j\in J$, where $J=\N_0$ or $\Z$, and $i\in I$. 
		In particular, $C$ sends a Parseval frame into another Parseval frame, thus, it must be unitary. 
		To see this, assume that $C\in\cB(\HH_1,\HH_2)$ is an isomorphism and $C(f_k)=g_k$, where $\{f_k\}_k$  and $\{g_k\}_k$ are Parseval frames of the Hilbert spaces $\HH_1$ and $\HH_2,$ respectively. Set  $\widetilde{C}:=C^{-1}$.
		Then we have:
		$$
		\|f\|^2= \sum_k |\langle f,f_k\rangle|^2 =  \sum_k |\langle f,\widetilde{C}C f_k\rangle|^2 =\sum_k |\langle \widetilde{C}^*f,g_k\rangle|^2 =
		\|\widetilde{C}^*f\|^2.
		$$
		Thus, $\widetilde{C}^*$ is unitary, and therefore so is $C$. This completes the proof.
	\end{proof}

	Our goal is to identify all frame-tuples, Parseval-tuples and Riesz-tuples, up to similarity. Let us begin by presenting a particular class of frame-tuples which will be crucial for our purpose.
	
	\subsection{ Basic tuples}\label{sec:basic-frame-tuples}% in $L^{2}(\TT,H^{2}_{\ell^{2}(I)})$}\label{sec:basic-frame-tuples}
	In this subsection we define two types of tuples, the {\it unilateral basic tuple} and the {\it bilateral basic tuple}.
	
	For the unilateral case, we will consider subspaces of $L^{2}(\TT,\KK)$ with $\KK=H^{2}_{\ell^{2}(I)}$, where $I$ is an at most countable index set.
	%, that is, the space of measurable functions defined in the circle $\TT,$
	%and having values in the space $H^{2}_{\ell^2(I)}$.
	So, for $f\in L^2(\TT,H^2_{\ell^2(I)})$,  we have $f(\lambda) \in H^2_{\ell^2(I)}$ and $f(\lambda)(z) \in \ell^2(I)$ for a.e. $\lambda,z \in \TT$. 
	Let us define a new operator acting on $L^{2}(\TT, H^{2}_{\ell^{2}(I)})$ which will play the role of the pointwise unilateral shift operator. 
	\begin{definition} Define $\widehat{S}:L^{2}(\TT, H^{2}_{\ell^{2}(I)})\to L^{2}(\TT, H^{2}_{\ell^{2}(I)})$ as
		\begin{equation}%\label{def_hatS}
			\widehat{S}f(\la)=S(f(\la)),\quad \text{a.e. } \la\in \TT,\, f\in L^2(\TT,H^{2}_{\ell^{2}(I)}).
		\end{equation}
		More precisely, 
		for $f\in L^2(\TT,H^{2}_{\ell^{2}(I)})$ and for a.e. $\lambda,z\in\TT$ 
		$$
		\widehat{S}f(\la)(z)=S(f(\la))(z)=zf(\lambda)(z).
		$$
	\end{definition}

	From the definition of the bilateral shift $U\in\cB(L^2(\TT,H^2_{\ell^{2}(I)}))$ and $\widehat{S}$, it is easy to see that they commute with each other. Moreover, $\widehat{S}$ is an isometry, since so is $S$. 
	Observe that with this operator in hand, we can construct an orthonormal basis of $L^{2}(\TT,H^{2}_{\ell^{2}(I)})$ from an orthonormal basis $\{\var_i\}_{i\in I}$ of $\ell^2(I)$. Indeed, as discussed in the previous section, $\{S^j{\var}_i\,:\,j\in\N_0,\, i\in I\}$ is an orthonormal basis of $H^2_{\ell^{2}(I)}$ and it is clear that the same system can be seen inside of $L^{2}(\TT,H^{2}_{\ell^{2}(I)})$ as $\left\{\widehat{S}^j{\var}_i\,:\,j\in\N_0,\, i\in I\right\}$. Hence, the  system 
	\begin{equation}\label{eq:orth-basis-L2H2}
		\left\{U^k\widehat{S}^j{\var}_i\,:\,k\in\Z,\,j\in\N_0,\, i\in I\right\}
	\end{equation} is an orthonormal basis of $L^{2}(\TT,H^{2}_{\ell^{2}(I)})$. Observe that \eqref{eq:orth-basis-L2H2} is a Fourier basis, $(U^{k} \widehat{S}^{j} \var_{i})(\la)(z)= \la^{k} z^{j} \var_{i}$ for a.e. $\la,z\in \TT$ and $i\in I$.

	\begin{definition} \label{b-t-unilateral-def}
	A tuple $(\cN,U|_{\cN},A,\{\varphi_i\}_{ i\in I})$ is called a {\it unilateral basic tuple} if
	 $\cN$ is a closed subspace  of  $L^{2}(\TT,H^{2}_{\ell^{2}(I)})$ that is $U$-reducing and $\hS^*$-invariant, $A: \cN\rightarrow \cN$
	 is the compression of $\widehat{S}$ to $\cN$, i.e., $A:= P_{\cN}\widehat{S}|_{\cN},$  and $\varphi_i:=P_{\cN}\var_i,$
	 where $\{\var_i\}_{i\in I}$ is the canonical orthonormal basis of $\ell^2(I).$
	\end{definition}
	
	Observe that in Definition \ref{b-t-unilateral-def} we are assuming that $U|_{\cN}$ commutes with $A$. Indeed, for every $f\in \cN$ 
	$$U|_\mathcal N A f= U|_\mathcal NP_{\cN} \hS f= P_{\cN}U|_\mathcal N\hS f = P_{\cN}\hS U|_\mathcal N f= AU|_\mathcal N f.$$
	Here we used that $U$ and $P_{\cN}$ commute because $\cN$ is $U$-reducing, and that $U$ and $\hS$ also commute.
	
	We now turn to the bilateral case. These tuples will be constructed from subspaces belonging to $L^2(\TT^2,\ell^2(I))$ that are reducing for the bilateral shift in each variable, defined as follows. For $i=1,2$,  let $ U_{i}:L^{2}(\TT^{2},\ell^{2}(I))\to L^{2}(\TT^{2},\ell^{2}(I))$ be the operator given by 
 \begin{equation}\label{def:Ui}
	(U_{i}f)(z_{1},z_{2}) = z_{i} f(z_{1},z_{2}),\quad\textrm{a.e. }(z_1,z_2)\in\TT^2.
 \end{equation}
Observe that $U_1$ and $U_2$ commute and $ \{ U_{1}^{k} U_{2}^{j} \var_{i} : k,j\in \Z, i\in I\}$ is an orthonormal basis of $L^{2}(\TT^{2},\ell^{2}(I))$.

\begin{definition}\label{b-t-bilateral-def}
	 A tuple $(\cN,U_1|_\cN,U_2|_\cN, \{\varphi_i \}_{i\in I} )$ is called a {\it bilateral basic tuple}  if
 $\cN$ is a closed subspace of $L^2(\TT^2,\ell^2(I))$ that is reducing for $U_1$ and $U_2$ and  $\varphi_i:=P_{\cN}\var_i,$ where
  $\{\var_i\}_{i\in I}$ is the canonical orthonormal basis of $\ell^2(I)$. 
\end{definition}

	Basic tuples are always frame-tuples. In fact, they are Parseval-tuples as we show next.
	
	\begin{proposition}\label{basic-frame-tuple}	
		Every basic tuple is a Parseval-tuple.
	\end{proposition}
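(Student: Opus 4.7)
The plan is to reduce the Parseval property in both the unilateral and bilateral cases to the elementary (Naimark-type) fact that projecting an orthonormal basis of an ambient Hilbert space onto a closed subspace yields a Parseval frame of that subspace. Concretely, it was already observed that $\{U^{k}\widehat{S}^{j}\var_{i}:k\in\Z,\,j\in\N_{0},\,i\in I\}$ is an orthonormal basis of $L^{2}(\TT,H^{2}_{\ell^{2}(I)})$ and that $\{U_{1}^{k}U_{2}^{j}\var_{i}:k,j\in\Z,\,i\in I\}$ is an orthonormal basis of $L^{2}(\TT^{2},\ell^{2}(I))$. So the bulk of the proof is to verify the pointwise identities
\begin{equation}
    (U|_{\cN})^{k}A^{j}\varphi_{i}=P_{\cN}U^{k}\widehat{S}^{j}\var_{i}\quad\text{and}\quad (U_{1}|_{\cN})^{k}(U_{2}|_{\cN})^{j}\varphi_{i}=P_{\cN}U_{1}^{k}U_{2}^{j}\var_{i},
\end{equation}
and then to compute, for any $f\in\cN$, the frame sum using $P_{\cN}f=f$.

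For the unilateral case I would first record two algebraic identities for the projection $P_{\cN}$. From $\cN$ being $U$-reducing one gets $P_{\cN}U^{k}=U^{k}P_{\cN}$ for every $k\in\Z$. From $\cN$ being $\widehat{S}^{*}$-invariant one has $\widehat{S}^{*}P_{\cN}=P_{\cN}\widehat{S}^{*}P_{\cN}$; taking adjoints yields $P_{\cN}\widehat{S}=P_{\cN}\widehat{S}P_{\cN}$, the identity that lets one push the projection through powers of $\widehat{S}$. I would then prove by induction on $j$ that $A^{j}\varphi_{i}=P_{\cN}\widehat{S}^{j}\var_{i}$: the case $j=0$ is just the definition $\varphi_{i}=P_{\cN}\var_{i}$, and for the inductive step
\begin{equation}
    A^{j+1}\varphi_{i}=P_{\cN}\widehat{S}(P_{\cN}\widehat{S}^{j}\var_{i})=P_{\cN}\widehat{S}P_{\cN}\widehat{S}^{j}\var_{i}=P_{\cN}\widehat{S}^{j+1}\var_{i}.
\end{equation}
Combining this with the commutation $(U|_{\cN})^{k}P_{\cN}=P_{\cN}U^{k}$ gives the first identity above. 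Then, for $f\in\cN$,
\begin{equation}
    \sum_{k,j,i}|\langle f,(U|_{\cN})^{k}A^{j}\varphi_{i}\rangle_{\cN}|^{2}=\sum_{k,j,i}|\langle f,P_{\cN}U^{k}\widehat{S}^{j}\var_{i}\rangle|^{2}=\sum_{k,j,i}|\langle f,U^{k}\widehat{S}^{j}\var_{i}\rangle|^{2}=\|f\|^{2},
\end{equation}
the last equality by Parseval applied to the orthonormal basis of the ambient space.

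The bilateral case is easier because $\cN$ reduces \emph{both} $U_{1}$ and $U_{2}$, so $P_{\cN}$ commutes with $U_{1}^{k}$ and $U_{2}^{j}$ for all $k,j\in\Z$, and no induction is needed: $(U_{1}|_{\cN})^{k}(U_{2}|_{\cN})^{j}\varphi_{i}=P_{\cN}U_{1}^{k}U_{2}^{j}\var_{i}$. The same Parseval computation then applies, using the orthonormal basis of $L^{2}(\TT^{2},\ell^{2}(I))$. I do not foresee a serious obstacle; the only subtlety worth highlighting is that, unlike the reducing situation, the mere $\widehat{S}^{*}$-invariance of $\cN$ only gives the one-sided identity $P_{\cN}\widehat{S}=P_{\cN}\widehat{S}P_{\cN}$ (and not a full commutation with $\widehat{S}$), which is nevertheless exactly what is needed to carry out the inductive step above.
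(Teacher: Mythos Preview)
Your proof is correct and follows essentially the same approach as the paper: both arguments hinge on the identity $P_{\cN}\widehat{S}=P_{\cN}\widehat{S}P_{\cN}$ (obtained by taking adjoints in the $\widehat{S}^{*}$-invariance condition) together with the commutation $P_{\cN}U^{k}=U^{k}P_{\cN}$ from $U$-reducing, in order to identify $(U|_{\cN})^{k}A^{j}\varphi_{i}$ with $P_{\cN}U^{k}\widehat{S}^{j}\var_{i}$ and then invoke the Naimark-type fact that projecting an orthonormal basis yields a Parseval frame. The only difference is cosmetic: you make the induction on $j$ and the final Parseval computation explicit, whereas the paper compresses these into a single displayed chain of equalities.
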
 
	\begin{proof} 
	
	\textit{Unilateral case.}
		We first note that the restriction of $U$ to $\cN$, $U|_\cN:\cN\to\cN$, is unitary. To prove the proposition we have to see that the system 
\begin{equation}\label{eq:basic-tuple}
\{U|_{\cN}^kA^j\varphi_i: k\in\Z, j \in\N_0, i\in I\}
\end{equation}		
 is a Parseval frame of $\cN$. We observe that \eqref{eq:basic-tuple} is the orthogonal projection onto $\cN$ of the 
		orthonormal basis $\{U^{k} \widehat{S}^{j} \var_{i} : k\in\Z,j\in \N_{0}, i\in I \}$. Indeed,  since $\cN$ is $\widehat S^*$-invariant, we have $\widehat S^*P_\cN=P_\cN \widehat S^*P_\cN$ and then 
		$ P_\cN\widehat S=P_\cN \widehat SP_\cN$. As $\cN$ is also $U$-reducing it  follows that 
		\begin{equation}\label{eq:intertwine-projection}
			P_{\fN} U^{k} \widehat{S}^{j} \var_{i} 
			=   U^{k} P_{\fN} \widehat{S}^{j} \var_{i}
			=   U^{k} P_{\fN} \widehat{S}^{j} P_{\fN} \var_{i}  
			= U|_\mathcal N^{k} A^j\varphi_{i}.
		\end{equation}
		Thus, the proposition for the unilateral case is proved.
	%\end{proof}
	
%It turns out that, as well as in Section \ref{sec:basic-frame-tuples}, these basic tuples are frame-tuples. 
%
%\begin{lemma}\label{bilateral_frame-tuple}
%$(\cN,U_1|_\cN,U_2|_\cN, \{P_{\fN}\var_{i} \}_{i\in I} )$ is a bilateral frame-tuple.
%\end{lemma}

%\begin{proof} 
\textit{Bilateral case.} This is very similar to the unilateral case. We just need to see that $\{U_1|_{\cN}^kU_2^j|_\cN\varphi_i: k\in\Z, j \in\Z, i\in I\}$ is a Parseval frame of $\cN$.  Since $ \{ U_{1}^{k} U_{2}^{j} \var_{i} : k,j\in \Z, i\in I\}$ is an orthonormal basis of $L^{2}(\TT^{2},\ell^{2}(I))$, then
%Since $L^{2}(\TT^{2},\ell^{2}(I))$ is isometrically isomorphic to 
%$L^{2}(\TT,L^2(\TT, \ell^{2}(I)))$, it is easily seen that $ \{ U_{1}^{k} U_{2}^{j} \var_{i} : k,j\in \Z, i\in I\}$ is an orthonormal basis of $L^{2}(\TT^{2},\ell^{2}(I))$ and then, 
$ \{ P_{\fN}U_{1}^{k} U_{2}^{j} \var_{i} : k,j\in \Z, i\in I\}$ is a Parseval frame for $\fN$. But $P_{\fN}U_{1}^{k} U_{2}^{j} \var_{i}=U_{1}|_\cN^{k} U_{2}|_\cN^{j} P_{\fN}\var_{i}$ for every $ k,j\in\Z,\,i\in I$. Therefore, $(\cN,U_1|_\cN,U_2|_\cN, \{\varphi_i \}_{i\in I} )$ is a Parseval-tuple.
 \end{proof}
 
Since basic tuples are Parseval-tuples, then two similar basic tuples must be unitarily equivalent by Lemma \ref{lem:similar-parseval-unitarilyequivalent}. Even more, they must be equal, as show next.
 
 \begin{theorem}\label{thm:similar-basic-equal}
 	If two basic tuples are similar, then they are equal.
 \end{theorem}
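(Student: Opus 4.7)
The plan is to exploit two earlier facts at once: basic tuples are Parseval-tuples (Proposition \ref{basic-frame-tuple}), so by Lemma \ref{lem:similar-parseval-unitarilyequivalent} any similarity between two basic tuples must already be implemented by a \emph{unitary} isomorphism $C$. I will prove equality by showing first that the two ambient subspaces coincide, and then observing that the remaining data of a basic tuple is determined by the subspace.

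Consider the unilateral case; write the two tuples as $(\cN_1,U|_{\cN_1},A_1,\{\varphi_i^1\})$ and $(\cN_2,U|_{\cN_2},A_2,\{\varphi_i^2\})$ and let $C\colon\cN_1\to\cN_2$ be the unitary implementing the similarity. The key step is to combine the intertwining relations $CU|_{\cN_1}=U|_{\cN_2}C$, $CA_1=A_2C$, $C\varphi_i^1=\varphi_i^2$ with the identity \eqref{eq:intertwine-projection}, which for $\ell=1,2$ gives
\begin{equation}
U|_{\cN_\ell}^k A_\ell^j \varphi_i^\ell \;=\; P_{\cN_\ell}\bigl(U^k \widehat{S}^j \var_i\bigr)\qquad (k\in\Z,\ j\in\N_0,\ i\in I).
\end{equation}
Applying $C$ to the $\ell=1$ equality and using the intertwinings yields
\begin{equation}
C\bigl(P_{\cN_1}(U^k\widehat S^j\var_i)\bigr)\;=\;P_{\cN_2}(U^k\widehat S^j\var_i).
\end{equation}
Since $\{U^k\widehat S^j\var_i\}$ is the orthonormal basis of $L^2(\TT,H^2_{\ell^2(I)})$ exhibited in \eqref{eq:orth-basis-L2H2}, expanding an arbitrary $f\in\cN_1$ as $f=\sum\langle f,U^k\widehat S^j\var_i\rangle\,U^k\widehat S^j\var_i$ (convergence in $L^2(\TT,H^2_{\ell^2(I)})$) and applying $P_{\cN_1}$, followed by continuity of $C$, gives $Cf=P_{\cN_2}f$ for every $f\in\cN_1$.

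With this formula in hand, the conclusion is immediate. For $f\in\cN_1$ the unitarity of $C$ forces $\|f\|=\|Cf\|=\|P_{\cN_2}f\|$, and since $P_{\cN_2}$ is an orthogonal projection this equality characterizes the elements of $\cN_2$. Hence $\cN_1\subseteq\cN_2$, and the symmetric argument using $C^{-1}$ yields the reverse inclusion, so $\cN_1=\cN_2$. Once the subspaces agree, $U|_{\cN_1}=U|_{\cN_2}$, $A_1=P_{\cN_1}\widehat S|_{\cN_1}=P_{\cN_2}\widehat S|_{\cN_2}=A_2$, and $\varphi_i^1=P_{\cN_1}\var_i=P_{\cN_2}\var_i=\varphi_i^2$, so the two tuples are literally the same; as a byproduct $C$ is the identity on $\cN_1$.

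The bilateral case goes through verbatim after replacing the orthonormal basis $\{U^k\widehat S^j\var_i\}$ with $\{U_1^kU_2^j\var_i\}_{k,j\in\Z,\,i\in I}$ of $L^2(\TT^2,\ell^2(I))$ and using reducibility of $\cN_\ell$ for both $U_1$ and $U_2$ to pull $P_{\cN_\ell}$ through $U_1^kU_2^j$. I expect no real obstacle beyond book-keeping; the only point that needs mild care is the passage from the identity on the projected basis vectors to the identity $Cf=P_{\cN_2}f$ on all of $\cN_1$, which is handled by the frame-type expansion above rather than by appealing to linear independence.
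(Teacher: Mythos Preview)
Your proof is correct and follows essentially the same approach as the paper: reduce to a unitary $C$ via Lemma~\ref{lem:similar-parseval-unitarilyequivalent}, combine the intertwining relations with \eqref{eq:intertwine-projection} to obtain $C\bigl(P_{\cN_1}(U^k\widehat S^j\var_i)\bigr)=P_{\cN_2}(U^k\widehat S^j\var_i)$ on the orthonormal basis \eqref{eq:orth-basis-L2H2}, and then conclude $\cN_1=\cN_2$. The only difference is cosmetic: where you give a self-contained argument (extending to $Cf=P_{\cN_2}f$ and using $\|f\|=\|P_{\cN_2}f\|$), the paper invokes \cite[Proposition~2.6]{HL} directly to get $P_{\cN_1}=P_{\cN_2}$.
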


\begin{proof}
	We will just prove the unilateral case, as the bilateral case will be analogous. Let  $(\cN_1,U|_{\cN_1}, A_1, \{P_{\cN_1}\var_i\}_{i\in I})$ and $(\cN_2,U|_{\cN_2}, A_2, \{P_{\cN_2}\var_i\}_{i\in I})$ be two similar basic tuples. Then, there exists a unitary isomorphism $C:\cN_1\to\cN_2$ such that $CA_1=A_2C$, $CU|_{\cN_1}=U|_{\cN_2}C$ and $CP_{\cN_1}\var_i=P_{\cN_2}\var_i$ for every $i\in I$. 
	
	 Moreover, by the intertwining properties of $C$ and \eqref{eq:intertwine-projection} we have that 
	\begin{align}
		C(P_{\cN_1}(U^k\widehat{S}^j\var_i)) &= C(U^kA_1^j P_{\cN_1}\var_i)
		=U^kA_2^jC(P_{\cN_1}\var_i) 
		= U^kA_2^jP_{\cN_2}\var_i \\
		&=P_{\cN_2} (U^k\widehat{S}^j\var_i),
	\end{align}
	for every $k\in\Z$, $j\in\N_0$ and $i\in I$. Since $\{U^k\widehat{S}^j\var_i\}_{k,j,i}$ is an orthonormal basis of $L^2(\TT,H^2_{\ell^2(I)})$, then \cite[Proposition 2.6]{HL} implies that $P_{\cN_1}=P_{\cN_2}$, and so $\cN_1=\cN_2$ and the tuples must be equal. 
\end{proof}

	 In what follows we will show that, in fact, \textit{any} frame-tuple $(\HH,T,L,\{w_i\}_{i\in I})$ must be similar to a unique basic tuple.

Since the $U$-reducing and $\widehat{S}^*$-invariant subspaces of $ L^{2}(\TT, H^{2}_{\ell^{2}(I)})$ serve as  models for frames 
of iterations of two commuting operators -one of them acting unilateraly- the study of their structure, that is very rich, is of independent interest. This is done in great detail in \cite{ACCP22}.

 On the other hand, for the bilateral case, the model subspaces are in $L^2(\TT^2,\ell^2(I))$ and they are  reducing for $U_1$ and $U_2$. This implies that they are \textit{multiplication-invariant} subspaces, and their structure has been very well studied, see for instance \cite{BI,BR}.%
 
\subsection{Unilateral frame-tuples}\label{subsec:unilateral_iterations}

Let $\HH$ be a separable Hilbert space. The main purpose of this subsection is to characterize the unilateral frame-tuples in $\HH$. That is, those tuples $(\HH,T,L,\{ w_i\}_{i\in I})$ where
$T,L\in \cB(\HH)$, $T$ is invertible, $TL=LT$, 
$\{ w_i\}_{i\in I}\subset \HH$ is  an at most countable set, and the system 
\begin{equation}\label{eq:frame_iterations}
	\left\{ T^{k}L^{j} w_{i}: k\in \Z, j\in \N_0,i\in I\right\}
\end{equation}
forms a frame for $\HH$. 

Assume the the system \eqref{eq:frame_iterations} is a Bessel sequence. We then consider its associated  \textit{synthesis operator} defined on $L^{2}(\TT,H^{2}_{\ell^{2}(I)})$, that is 
		\begin{equation}\label{synthesis}
			C:L^{2}(\TT,H^{2}_{\ell^{2}(I)}) \rightarrow \HH, \;\;\;\;\;\;
			%Cf =\sum_{k\in\Z} \sum_{j\geq 0} \sum_{i\in I} f_{kj}^{i} T^{k} L^{j} w_{i}.
			Cf =\sum_{k,j,i} f_{kj}^{i} T^{k} L^{j} w_{i},
		\end{equation}
	where $$f^i_{kj}:=\langle f, U^k\widehat{S}^j\var_i\rangle
	= \int_{\TT} \int_{\TT} \langle f(\la)(z),\var_{i} \rangle \, \la^{-k} z^{-j} \, dz \,d\la,$$ for $k\in\Z, j \in\N_0, i\in I$ (the Fourier coefficients in the basis \eqref{eq:orth-basis-L2H2}).
	Note that, since the system \eqref{eq:frame_iterations} is a Bessel sequence, the series converges unconditionally. 	
	
	\begin{remark}
%	Usually, the synthesis operator of a frame is defined in the sequence space $\ell^{2}(\Z\times \N_0\times I).$
%	We have used here that this sequence space is isometrically isomorphic to $L^{2}(\TT,H^{2}_{\ell^{2}(I)})$ via the Fourier transform.
	Usually, the synthesis operator of a system like in \eqref{eq:frame_iterations} would be defined on the sequence space $\ell^{2}(\Z\times \N_0\times I).$ Here, we precomposed the usual synthesis operator with the isometric  isomorphism between $L^2(\mathbb{T},H^{2}_{\ell^{2}(I)})$ and $ \ell^2(\mathbb{Z} \times \mathbb{N}_{0} \times I)$ defined by $f\mapsto\{\langle f, U^k\widehat{S}^j\var_i\rangle\}_{k,j,i}$.
	\end{remark}

Now we are ready to prove the characterization of unilateral frame-tuples. 

    \begin{theorem}\label{thm:forward ite}

 A tuple $(\HH,T, L,\{w_i\}_{i\in I})$  is a unilateral frame-tuple if and only if it is similar to a unilateral basic tuple. Moreover, this basic tuple is unique.
   \end{theorem}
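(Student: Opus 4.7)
The plan is to use the synthesis operator $C$ from \eqref{synthesis} as the bridge between any frame-tuple and a canonical basic tuple, and to read off uniqueness from Theorem \ref{thm:similar-basic-equal}. The direction $\Longleftarrow$ is essentially free: Proposition \ref{basic-frame-tuple} says basic tuples are Parseval-tuples, and Lemma \ref{similar_frames} shows that similarity preserves the frame property, so any tuple similar to a basic tuple is automatically a frame-tuple.

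For the direction $\Longrightarrow$, I would start from a unilateral frame-tuple $(\HH,T,L,\{w_i\}_{i\in I})$ and note that the frame property makes $C:L^2(\TT,H^2_{\ell^2(I)})\to\HH$ bounded and surjective. The next step is to verify the intertwining identities $CU=TC$ and $C\widehat{S}=LC$ by a routine computation on the Fourier coefficients in the basis \eqref{eq:orth-basis-L2H2}, exploiting that $U$ shifts the $k$-index, $\widehat{S}$ shifts the $j$-index, and $TL=LT$ keeps the resulting double series consistent.

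The structural heart of the argument is to set $\cN := (\ker C)^{\perp}$ and observe that the intertwinings, combined with invertibility of $T$ and unitarity of $U$, force $\ker C$ to be both $U$- and $U^{*}$-invariant (hence $U$-reducing) and also $\widehat{S}$-invariant (hence $\cN$ is $\widehat{S}^{*}$-invariant). Setting $A:=P_\cN\widehat{S}|_\cN$ and $\varphi_i:=P_\cN\var_i$ then produces a unilateral basic tuple in the sense of Definition \ref{b-t-unilateral-def}. By the open mapping theorem, $C|_\cN:\cN\to\HH$ is a bounded isomorphism; a direct calculation shows $C(\var_i)=w_i$ (since the constant function $\var_i$ has a single nonzero Fourier coefficient, at position $(0,0,i)$), hence $C|_\cN(\varphi_i)=w_i$, and the intertwinings for $C$ descend to $T\,C|_\cN=C|_\cN\,U|_\cN$ and $L\,C|_\cN=C|_\cN\,A$ (the latter using $C\circ P_{\ker C}=0$).

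Uniqueness is then immediate: any two basic tuples similar to the same frame-tuple are similar to each other, so Theorem \ref{thm:similar-basic-equal} forces them to coincide. The step I expect to require the most care is verifying that $\ker C$ is $U$-\emph{reducing} rather than merely $U$-invariant; this is exactly where invertibility of $T$ enters crucially, and it is also what allows the full ambient space $L^2(\TT,H^2_{\ell^2(I)})$ (with $U$ two-sided) to serve as the correct model in the unilateral setting.
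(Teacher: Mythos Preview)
Your proposal is correct and follows essentially the same approach as the paper: the synthesis operator $C$ with the intertwining relations $TC=CU$, $T^{-1}C=CU^*$, $LC=C\widehat{S}$ yields that $\cN=(\ker C)^\perp$ is $U$-reducing and $\widehat{S}^*$-invariant, $C|_\cN$ furnishes the similarity, and uniqueness comes from Theorem \ref{thm:similar-basic-equal}. Your emphasis on why invertibility of $T$ is needed to get $U$-reducing (not merely $U$-invariant) is exactly the point the paper makes via $T^{-1}C=CU^*$.
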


\begin{proof}
	%[Proof of Theorem \ref{theorem1}]
Assuming that $(\HH,T,L,\{w_i\}_{ i\in I})$ is a unilateral frame-tuple we will find a similar unilateral basic tuple $(\cN,U,A,\{\varphi_i\}_{i\in I})$.

Let $C$ be the synthesis operator defined as in \eqref{synthesis}.
If $f\in L^2(\TT,H^2_{\ell^2(I)})$ we have:
%		Now, consider the operators $U$ as in Definition \ref{def:U}  (with $\KK=H^2_{\ell^2(I)}$) and $\widehat{S}$ as in Definition \ref{def:S-sombrero} (with $\KK=\ell^2(I)$). Observing that for $f\in L^2(\TT,H^2_{\ell^2(I)})$, 
		\begin{equation}
			\langle Uf , U^k\widehat S^j\var_i\rangle = f^i_{k-1,j} \quad\text{and}\quad  \langle U^*f , U^kS^j \var_i\rangle = f^i_{k+1,j},
		\end{equation} 
		for every $(k,j,i)\in\Z\times\N_0\times I$.
		Analogously, we also have  $\langle \widehat{S}f , U^k \widehat{S}^j \var_i\rangle = f^i_{k,j-1}$. Thus,   
		it is easy to see that the following intertwining relations hold:
		\begin{equation}\label{eq:intertwining}
			TC=CU,\quad T^{-1}C = CU^* 	\quad \text{and} \quad  LC=C\widehat{S}.
		\end{equation}
		From here we deduce that $\ker(C)\subseteq L^2(\TT,H^2_{\ell(I)})$ is reducing for $U$ and invariant for $\widehat{S}$ or equivalently,
		$\cN:=\ker(C)^{\perp}$ is reducing for $U$ and invariant for $\widehat{S}^*$.
		
		Given that the system \eqref{eq:frame_iterations} is a frame of $\HH$, $C$ is a bounded surjective operator.
		Then,  the restriction of $C$ to $\cN$ 
		\begin{equation}
			C|_{\cN}:\cN \to \HH,
		\end{equation}
		is a bounded isomorphism. 
		On the other hand, let $\varphi_{i} := P_{\cN}\var_{i}$ for every $i\in I$ and observe that $C(P_{\cN^{\perp}} \var_{i'}) =0$ for every $i'\in I$, and that $\var_{i'}$ is orthogonal to $U^{k} \widehat{S}^{j} \var_{i}$ 	for every $(k,j,i)\in\Z\times\N_0\times I$ except for $k=j=0$ and $i=i'$ in which case $\langle \var_{i'},\var_{i'}\rangle=1$. Then, we obtain
		\begin{equation}
			C|_{\cN} (\varphi_{i'})
			= C \var_{i'} = \sum_{k,j,i} \langle \var_{i'}, U^{k} \widehat{S}^{j} \var_{i}   \rangle T^{k} L^{j} w_{i} = w_{i'}, \quad \text{ for every } i'\in I.
		\end{equation}
	
		Since relations in \eqref{eq:intertwining} still hold when restricting $C$ to $\cN$, we conclude that $(\HH,T,L,\{ w_i\}_{ i\in I})$ is similar to $(\cN,U,A,\{\varphi_i\}_{ i\in I})$. 		
		
	The converse is a consequence of Lemma \ref{similar_frames} and Proposition \ref{basic-frame-tuple}.
The unicity follows from Theorem \ref{thm:similar-basic-equal}. That is, if there exists another basic tuple similar to $(\HH,T,L,\{w_i\}_{ i\in I})$, then by transitivity it must be similar to $(\cN,U,A,\{\varphi_i\}_{ i\in I})$, and therefore equal.
\end{proof}

If $\#I=1$ in Theorem \ref{thm:forward ite}, then $\ell^{2}(I)=\CC$ and $H^{2}_{\ell^{2}(I)} = H^{2}$. Thus, we have the following:

\begin{corollary}\label{cor:forward 1generator}
	A tuple $(\HH,T,L,w)$ is a unilateral frame-tuple if and only if there exists a $U$-reducing and $\widehat{S}^{*}$-invariant subspace $\cN\subseteq L^{2}(\TT,H^{2})$ such that $(\HH,T,L,w)$ is similar to $(\cN,U,A, \varphi)$, where $A$ is the compression of $\widehat{S}$ to $\cN$ and $\varphi=P_{\cN}1$.
	\end{corollary}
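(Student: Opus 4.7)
The plan is to obtain this corollary as an immediate specialization of Theorem \ref{thm:forward ite} to the case of a single generator, simply by tracking what the ambient spaces and the canonical basis become when $\#I=1$.

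First, I would take $I$ to be any singleton, so that the sequence space $\ell^{2}(I)$ is one-dimensional and may be identified with $\CC$. Under this identification, the Hardy space with multiplicity $H^{2}_{\ell^{2}(I)}$ coincides with the scalar Hardy space $H^{2}$, and consequently the ambient vector-valued space $L^{2}(\TT,H^{2}_{\ell^{2}(I)})$ becomes $L^{2}(\TT,H^{2})$. The canonical orthonormal basis $\{\var_{i}\}_{i\in I}$ of $\ell^{2}(I)$ reduces to the single vector $\var=1\in\CC$, which, under the embedding of $\CC$ into $H^{2}$ and then into $L^{2}(\TT,H^{2})$ as a constant function, becomes the constant function $1$. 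The operators $U$ on $L^{2}(\TT,H^{2})$ and the pointwise unilateral shift $\widehat{S}$ are defined exactly as in the general case.

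Next, I would apply Theorem \ref{thm:forward ite} directly. The theorem says that $(\HH,T,L,w)$ is a unilateral frame-tuple if and only if it is similar to a unique unilateral basic tuple $(\cN,U|_{\cN},A,\{\varphi_{i}\}_{i\in I})$, where $\cN\subseteq L^{2}(\TT,H^{2}_{\ell^{2}(I)})$ is $U$-reducing and $\widehat{S}^{*}$-invariant, $A=P_{\cN}\widehat{S}|_{\cN}$, and $\varphi_{i}=P_{\cN}\var_{i}$. In the singleton case, there is exactly one generator $\varphi=P_{\cN}\var=P_{\cN}1$, which is precisely the statement of the corollary.

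There is no real obstacle here; the only thing to be slightly careful about is the identification of the constant function $1$ across three levels (scalar, $H^{2}$-valued, and $L^{2}(\TT,H^{2})$-valued), but this is the canonical embedding described in Section \ref{sec:famework}, so $P_{\cN}1$ is unambiguous. The converse direction, as in the theorem, follows from Lemma \ref{similar_frames} together with Proposition \ref{basic-frame-tuple}, and uniqueness (if one wished to include it) from Theorem \ref{thm:similar-basic-equal}.
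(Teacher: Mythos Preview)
Your proposal is correct and matches the paper's approach exactly: the paper simply observes that when $\#I=1$ one has $\ell^{2}(I)=\CC$ and $H^{2}_{\ell^{2}(I)}=H^{2}$, so the corollary is an immediate specialization of Theorem \ref{thm:forward ite}. Your additional remark identifying the canonical basis vector with the constant function $1$ makes the specialization fully explicit.
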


Another implication of Theorem \ref{thm:forward ite} is the characterization of Parseval-tuples. 

\begin{corollary}
	A tuple $(\HH,T,L,\{w_{i}\}_{i\in I})$ is a unilateral Parseval-tuple if and only if it is  unitarily equivalent to a unilateral basic tuple.
\end{corollary}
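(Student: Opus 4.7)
The statement is a corollary that follows by combining Theorem~\ref{thm:forward ite}, Proposition~\ref{basic-frame-tuple} and Lemma~\ref{lem:similar-parseval-unitarilyequivalent}, so my plan is just to chain these three results carefully in both directions.

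For the forward implication, I would start by noting that every Parseval-tuple is, in particular, a frame-tuple. Hence Theorem~\ref{thm:forward ite} applies and produces a unilateral basic tuple $(\cN, U|_\cN, A, \{\varphi_i\}_{i\in I})$ that is similar to $(\HH,T,L,\{w_i\}_{i\in I})$ via some bounded isomorphism $C$. By Proposition~\ref{basic-frame-tuple}, this basic tuple is itself a Parseval-tuple. Therefore both tuples in the similarity are Parseval-tuples, and Lemma~\ref{lem:similar-parseval-unitarilyequivalent} immediately upgrades the similarity $C$ to a unitary equivalence, which is exactly what we want.

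For the converse, suppose $(\HH,T,L,\{w_i\}_{i\in I})$ is unitarily equivalent to a unilateral basic tuple $(\cN,U|_\cN,A,\{\varphi_i\}_{i\in I})$. Since unitary equivalence is a particular instance of similarity, Lemma~\ref{similar_frames} guarantees that the frame property is transferred from the basic tuple to our tuple. However, we need more: that a Parseval frame is sent to a Parseval frame. This follows at once from the fact that a unitary operator $V:\cN\to\HH$ satisfies $\|V^*f\|=\|f\|$ for all $f\in\HH$, so the computation
\begin{equation}
\sum_{k,j,i}|\langle f, T^k L^j w_i\rangle|^2 = \sum_{k,j,i}|\langle V^*f, U|_\cN^k A^j \varphi_i\rangle|^2 = \|V^* f\|^2 = \|f\|^2,
\end{equation}
which uses the intertwining properties $T^k L^j w_i = V(U|_\cN^k A^j \varphi_i)$ and the fact that the basic tuple is Parseval by Proposition~\ref{basic-frame-tuple}, shows that $(\HH,T,L,\{w_i\}_{i\in I})$ is indeed a Parseval-tuple.

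I do not anticipate any technical obstacle here: the corollary is essentially the Parseval-flavored restatement of Theorem~\ref{thm:forward ite}, and all the machinery (in particular Lemma~\ref{lem:similar-parseval-unitarilyequivalent} which handles the delicate point that similar Parseval-tuples must be unitarily equivalent) has already been established. The only thing to be careful about is to cite each of these previous results in the correct place.
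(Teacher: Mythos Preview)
Your proof is correct and follows essentially the same approach as the paper: the forward direction chains Theorem~\ref{thm:forward ite}, Proposition~\ref{basic-frame-tuple}, and Lemma~\ref{lem:similar-parseval-unitarilyequivalent} exactly as the authors do, and for the converse the paper simply says ``Parseval frames are preserved under unitary operators,'' which is precisely the computation you spell out.
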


\begin{proof}
Let us assume that $(\HH,T,L,\{w_{i}\}_{i\in I})$ is a unilateral Parseval-tuple. Then, from Theorem \ref{thm:forward ite}, it is similar to a 
basic tuple, that by Proposition \ref{basic-frame-tuple} is a Parseval-tuple. Thus,  by Lemma \ref{lem:similar-parseval-unitarilyequivalent}, they must be unitarily equivalent.
The other implication is immediate since Parseval frames are preserved under unitary operators.
\end{proof}

In the next result, we show that {\it any} unilateral Riesz-tuple must be  similar to 
a basic tuple where the Hilbert space is the whole $L^{2}(\TT,H^{2}_{\ell^{2}(I)})$. 

\begin{proposition}
A tuple $(\HH,T,L,\{w_{i}\}_{i\in I})$ is a unilateral Riesz-tuple if and only if it is similar to $(L^{2}(\TT,H^{2}_{\ell^{2}(I)}), U, \widehat{S}, \{\var_{i}\}_{i\in I})$, where $\{\var_{i}\}_{i\in I}$ is the canonical basis of $\ell^2(I)$.
\end{proposition}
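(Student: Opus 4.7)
The plan is to leverage Theorem \ref{thm:forward ite} and trace through what additional structure the Riesz condition forces on the associated basic tuple. Since any Riesz-tuple is in particular a frame-tuple, Theorem \ref{thm:forward ite} already gives us a unique unilateral basic tuple $(\cN,U|_\cN,A,\{\varphi_i\}_{i\in I})$ to which $(\HH,T,L,\{w_i\}_{i\in I})$ is similar. The goal reduces to showing that this basic tuple is the maximal one, i.e.\ that $\cN=L^{2}(\TT,H^{2}_{\ell^{2}(I)})$.

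For the direct implication, I would revisit the construction of the synthesis operator $C$ in \eqref{synthesis} and use the standard fact that a frame is a Riesz basis if and only if its synthesis operator (pre-composed with the isometric isomorphism to $\ell^{2}(\Z\times\N_{0}\times I)$) is injective, equivalently a bounded isomorphism. Thus $\ker(C)=\{0\}$, and from the proof of Theorem \ref{thm:forward ite} we have $\cN=\ker(C)^{\perp}$, forcing $\cN=L^{2}(\TT,H^{2}_{\ell^{2}(I)})$. On this full space $P_{\cN}=I$, so the compression $A$ equals $\widehat{S}$, the restriction $U|_{\cN}$ equals $U$, and $\varphi_i=P_\cN \var_i=\var_i$. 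This identifies the basic tuple exactly with $(L^{2}(\TT,H^{2}_{\ell^{2}(I)}),U,\widehat{S},\{\var_i\}_{i\in I})$.

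For the converse, the system generated by $(L^{2}(\TT,H^{2}_{\ell^{2}(I)}),U,\widehat{S},\{\var_i\}_{i\in I})$ is precisely $\{U^{k}\widehat{S}^{j}\var_{i}: k\in\Z,\,j\in\N_{0},\,i\in I\}$, which is the orthonormal basis \eqref{eq:orth-basis-L2H2} of $L^{2}(\TT,H^{2}_{\ell^{2}(I)})$ and hence a Riesz basis. If $(\HH,T,L,\{w_i\}_{i\in I})$ is similar to this tuple via a bounded isomorphism $C\colon L^{2}(\TT,H^{2}_{\ell^{2}(I)})\to \HH$, then $C(U^{k}\widehat{S}^{j}\var_{i})=T^{k}L^{j}w_{i}$ by the intertwining properties, and the image of a Riesz basis under a bounded isomorphism is again a Riesz basis.

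The argument is essentially a corollary of Theorem \ref{thm:forward ite} plus the elementary characterization of Riesz bases via injectivity of the synthesis operator, so I do not anticipate any significant obstacle; the only point requiring a moment of care is confirming that the synthesis operator \eqref{synthesis} of a Riesz basis is bijective, which is a classical fact about Riesz bases (they are exact frames).
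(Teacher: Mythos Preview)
Your proof is correct and follows essentially the same approach as the paper: both argue that for a Riesz basis the synthesis operator $C$ has trivial kernel, so $\cN=\ker(C)^\perp=L^{2}(\TT,H^{2}_{\ell^{2}(I)})$, and invoke the construction from the proof of Theorem \ref{thm:forward ite}. Your write-up is simply more explicit (in particular you spell out the converse, which the paper dismisses as ``straightforward''), but there is no substantive difference in strategy.
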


\begin{proof}
If   $(\HH,T,L,\{w_{i}\}_{i\in I})$ is a unilateral Riesz-tuple, the kernel of the synthesis operator is the trivial subspace $\{0\}.$
Thus, its orthogonal complement is $L^{2}(\TT,H^{2}_{\ell^{2}(I)})$ and by the proof of Theorem \ref{thm:forward ite} we obtain the claim in the proposition. 
The converse is straightforward.
\end{proof}

\subsection{Bilateral frame-tuples}\label{L_invertible} 
%\carlos{ Es verdad, quiz\'as se podria decir algo como: "The results for the bilateral case are similar and its poofs use the same arguments than the unilateral case with the obvious differences, so, we will just state the theorems without proof. "}

When the operator $L$ is invertible, it is possible to consider integer iterations of it. Given a tuple $(\HH,T,L,\{ w_i\}_{i\in I})$ with $L$ being invertible, we seek for the frame condition on the set 
$$
	 \left\{ T^{k}L^{j} w_{i}: k,j \in \Z,i\in I\right\}.
$$

As well as in the previous subsection, we obtain  that the set of iterations of $T$ and $L$ generated by $\{ w_i\}_{i\in I}$ is a frame for $\HH$ if and only if the tuple 
$(\HH,T,L,\{ w_i\}_{i\in I})$ is similar to a unique bilateral basic tuple. This is proved by following the same arguments  as in the proof of Theorem \ref{thm:forward ite} replacing the synthesis operator by the corresponding one for this case, that is, 
$C:L^{2}(\TT^{2},\ell^{2}(I)) \rightarrow \HH$  \begin{equation*}
		Cf = \sum_{k,j,i} f_{kj}^{i} T^{k} L^{j} w_{i}
	\end{equation*}
with $f_{kj}^{i}=\langle f, U_{1}^{k} U_{2}^{j} \var_{i} \rangle, $ for $ k,j\in \Z, i\in I$. Hence, we just state the following theorem without proof.

	\begin{theorem}\label{thm:bilateral ite}

	 A tuple $(\HH,T, L,\{w_i\}_{i\in I})$  is a bilateral frame-tuple if and only if it is similar to a bilateral basic tuple. Moreover, this basic tuple is unique.
	\end{theorem}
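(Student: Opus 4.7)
The plan is to mirror the proof of Theorem \ref{thm:forward ite} almost verbatim, with the bilateral shift $U_2$ playing the role previously played by $\widehat{S}$, and using the fact that both operators in the basic model are unitary in the bilateral setting. Assuming $(\HH,T,L,\{w_i\}_{i\in I})$ is a bilateral frame-tuple, I would begin with the synthesis operator
\[ C:L^{2}(\TT^{2},\ell^{2}(I)) \rightarrow \HH, \qquad Cf = \sum_{k,j,i} f_{kj}^{i}\, T^{k} L^{j} w_{i}, \]
where $f_{kj}^{i} = \langle f, U_{1}^{k} U_{2}^{j} \var_{i} \rangle$. The Bessel property ensures $C$ is bounded, and the frame hypothesis ensures $C$ is surjective with $C|_{\ker(C)^\perp}$ a bounded isomorphism.

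Next I would establish the intertwining relations, which in the bilateral case take the symmetric form
\[ TC = CU_1,\quad T^{-1}C = CU_1^*,\quad LC = CU_2,\quad L^{-1}C = CU_2^*. \]
These follow from direct index shifts on the Fourier coefficients with respect to the orthonormal basis $\{U_1^kU_2^j\var_i\}_{k,j\in\Z,\, i\in I}$ of $L^2(\TT^2,\ell^2(I))$. A consequence is that $\ker(C)$ is invariant under $U_1,U_1^*,U_2,U_2^*$, so $\cN:=\ker(C)^\perp$ is a closed subspace of $L^2(\TT^2,\ell^2(I))$ that is reducing for both $U_1$ and $U_2$, making it a legitimate underlying space for a bilateral basic tuple.

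Then, setting $\varphi_i := P_\cN \var_i$, I would check exactly as in the proof of Theorem \ref{thm:forward ite} that $C|_\cN(\varphi_i) = C\var_i = w_i$ for every $i\in I$, since in the expansion of $\var_i$ in the basis $\{U_1^kU_2^j\var_{i'}\}$ only the index $k=j=0$, $i'=i$ contributes. The restriction $C|_\cN:\cN\to\HH$ is thus a bounded isomorphism sending $\varphi_i$ to $w_i$ and, by the intertwining relations above, intertwines $(T,L)$ with $(U_1|_\cN,U_2|_\cN)$. This exhibits the desired similarity between $(\HH,T,L,\{w_i\}_{i\in I})$ and the bilateral basic tuple $(\cN,U_1|_\cN,U_2|_\cN,\{\varphi_i\}_{i\in I})$.

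For the converse, Lemma \ref{similar_frames} combined with Proposition \ref{basic-frame-tuple} (bilateral case) shows that any tuple similar to a bilateral basic tuple is a frame-tuple. Uniqueness is obtained by transitivity of similarity: any other bilateral basic tuple similar to $(\HH,T,L,\{w_i\}_{i\in I})$ would be similar to $(\cN,U_1|_\cN,U_2|_\cN,\{\varphi_i\}_{i\in I})$, and Theorem \ref{thm:similar-basic-equal} then forces them to coincide. The main obstacle is really only notational: one must keep track of the four intertwining relations instead of three, but since both $U_1$ and $U_2$ are unitary the verification is in fact slightly cleaner than its unilateral counterpart, which is why the authors omit the proof.
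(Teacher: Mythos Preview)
Your proposal is correct and follows exactly the approach the paper indicates: the authors explicitly state that the result ``is proved by following the same arguments as in the proof of Theorem \ref{thm:forward ite} replacing the synthesis operator by the corresponding one for this case,'' and they give precisely the synthesis operator you wrote down before stating the theorem without proof. Your verification of the four intertwining relations, the reducing property of $\cN$ for both $U_1$ and $U_2$, and the appeal to Lemma \ref{similar_frames}, Proposition \ref{basic-frame-tuple}, and Theorem \ref{thm:similar-basic-equal} for the converse and uniqueness are all faithful to that outline.
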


As in the case of unilateral iterations, we have the following particular case of Theorem \ref{thm:bilateral ite} when $\#I=1$.

\begin{corollary}\label{cor:bilateral 1generator}
A tuple $(\HH,T,L,w)$ is a bilateral frame-tuple if and only if there exists a subspace $\cN\subseteq L^{2}(\TT^{2})$ which is reducing for $U_{1}$ and $U_{2}$ such that $(\HH,T,L,w)$ is similar to  $(\cN,U_{1}|_{\fN},U_{2}|_{\fN}, P_{\fN} 1 )$, where $1\in L^{2}(\TT^{2})$ is the function which is constantly one.
\end{corollary}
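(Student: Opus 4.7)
The plan is to deduce this corollary directly from Theorem \ref{thm:bilateral ite} by specializing to the case $\#I=1$. First I would take $I=\{1\}$, so that $\ell^{2}(I)$ is canonically identified with $\CC$ and the canonical orthonormal basis $\{\var_{i}\}_{i\in I}$ reduces to the single vector $\var_{1}=1$. Under this identification the ambient Hilbert space $L^{2}(\TT^{2},\ell^{2}(I))$ becomes $L^{2}(\TT^{2})$, and the operators $U_{1},U_{2}$ as defined in \eqref{def:Ui} remain multiplication by $z_{1}$ and $z_{2}$ respectively. The constant vector $\var_{1}$ corresponds precisely to the constant function $1\in L^{2}(\TT^{2})$ mentioned in the statement.

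With these identifications in place, a bilateral basic tuple in the sense of Definition \ref{b-t-bilateral-def} is exactly a tuple of the form $(\cN,U_{1}|_{\cN},U_{2}|_{\cN},P_{\cN}1)$ where $\cN\subseteq L^{2}(\TT^{2})$ is a closed subspace reducing both $U_{1}$ and $U_{2}$. Therefore the "similar to a bilateral basic tuple" condition in Theorem \ref{thm:bilateral ite} translates verbatim into the existence of such a subspace $\cN$ with $(\HH,T,L,w)$ similar to $(\cN,U_{1}|_{\cN},U_{2}|_{\cN},P_{\cN}1)$.

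Applying Theorem \ref{thm:bilateral ite} to the tuple $(\HH,T,L,\{w\})$ (viewed with the singleton index set) then yields both implications simultaneously: $(\HH,T,L,w)$ is a bilateral frame-tuple if and only if it is similar to a bilateral basic tuple, which by the preceding paragraph is exactly the content of the corollary. No real obstacle is present here; the only care needed is in verifying that the canonical identification $\ell^{2}(\{1\})\cong\CC$ carries $\var_{1}$ to the constant function $1$ and preserves the action of the shift operators, both of which are immediate from the definitions.
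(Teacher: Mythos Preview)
Your proposal is correct and matches the paper's approach exactly: the paper states this corollary as the particular case of Theorem \ref{thm:bilateral ite} when $\#I=1$, and your argument makes explicit the routine identifications $\ell^{2}(\{1\})\cong\CC$, $L^{2}(\TT^{2},\ell^{2}(\{1\}))\cong L^{2}(\TT^{2})$, and $\var_{1}\mapsto 1$ that justify this specialization.
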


Also, we have the next results regarding Parseval-tuples and Riesz-tuples. 
 \begin{corollary}
	A tuple $(\HH,T,L,\{w_{i}\}_{i\in I})$ is a bilateral Parseval-tuple if and only if it is unitarily equivalent to a bilateral basic tuple.
\end{corollary}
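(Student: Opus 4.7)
The plan is to mirror almost verbatim the argument given for the unilateral Parseval corollary, replacing Theorem~\ref{thm:forward ite} with its bilateral counterpart Theorem~\ref{thm:bilateral ite}. First I would take a bilateral Parseval-tuple $(\HH,T,L,\{w_i\}_{i\in I})$ and invoke Theorem~\ref{thm:bilateral ite} to produce a bilateral basic tuple $(\cN,U_1|_{\cN},U_2|_{\cN},\{\varphi_i\}_{i\in I})$ similar to it via some bounded isomorphism $C:\cN\to\HH$.

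Next I would observe, via Proposition~\ref{basic-frame-tuple}, that this bilateral basic tuple is itself a Parseval-tuple. Now both tuples under consideration are Parseval-tuples and similar, so Lemma~\ref{lem:similar-parseval-unitarilyequivalent} forces $C$ to be unitary, giving the unitary equivalence in one direction.

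For the converse direction, assume $(\HH,T,L,\{w_i\}_{i\in I})$ is unitarily equivalent to some bilateral basic tuple $(\cN,U_1|_{\cN},U_2|_{\cN},\{\varphi_i\}_{i\in I})$, which by Proposition~\ref{basic-frame-tuple} generates a Parseval frame in $\cN$. A unitary operator carries Parseval frames to Parseval frames, and by the intertwining properties it maps $U_1|_{\cN}^k U_2|_{\cN}^j \varphi_i$ to $T^k L^j w_i$; hence the corresponding system of iterations in $\HH$ is Parseval, completing the proof.

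Since the argument is entirely parallel to the unilateral Parseval corollary, there is no real obstacle here: the only point worth checking is that the unitary equivalence in the converse really does transport the frame property to the iterated system $\{T^k L^j w_i\}_{k,j\in\Z,i\in I}$, but this is immediate from the two intertwining relations $T\circ C = C\circ U_1|_{\cN}$ and $L\circ C = C\circ U_2|_{\cN}$ built into the definition of similarity.
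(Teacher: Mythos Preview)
Your proposal is correct and follows exactly the approach the paper intends: the paper states this bilateral corollary without proof, having earlier remarked that the bilateral results are obtained ``by following the same arguments'' as in the unilateral case, and your argument is precisely the bilateral transcription of the unilateral Parseval corollary's proof (Theorem~\ref{thm:bilateral ite} in place of Theorem~\ref{thm:forward ite}, then Proposition~\ref{basic-frame-tuple} and Lemma~\ref{lem:similar-parseval-unitarilyequivalent}).
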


\begin{proposition}
	A tuple $(\HH,T,L,\{w_{i}\}_{i\in I})$ is a bilateral Riesz-tuple if and only if it is similar to $(L^{2}(\TT^2,\ell^{2}(I)), U_1, U_2, \{\var_{i}\}_{i\in I})$, where $\{\var_i\}_{i\in I}$ is the canonical basis of $\ell^2(I)$.
\end{proposition}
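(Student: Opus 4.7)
The plan is to mirror the proof of the unilateral Riesz-tuple proposition, which was itself a direct consequence of Theorem \ref{thm:forward ite}. For the bilateral case, the analog of Theorem \ref{thm:forward ite} is Theorem \ref{thm:bilateral ite}, so the same strategy applies: identify the Riesz condition with the kernel of the synthesis operator being trivial, which forces the model subspace $\cN$ to be all of $L^2(\TT^2,\ell^2(I))$.

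For the forward direction, I would assume $(\HH,T,L,\{w_i\}_{i\in I})$ is a bilateral Riesz-tuple and introduce the synthesis operator
\begin{equation}
C:L^{2}(\TT^{2},\ell^{2}(I)) \rightarrow \HH,\qquad Cf = \sum_{k,j,i} f_{kj}^{i} T^{k} L^{j} w_{i},
\end{equation}
where $f_{kj}^{i}=\langle f, U_{1}^{k} U_{2}^{j} \var_{i} \rangle$. Since $\{U_{1}^{k} U_{2}^{j} \var_{i}\}_{k,j,i}$ is an orthonormal basis of $L^2(\TT^2,\ell^2(I))$ and $\{T^{k} L^{j} w_{i}\}_{k,j,i}$ is, by hypothesis, a Riesz basis of $\HH$, standard Riesz basis theory tells us that $C$ is a bounded isomorphism; in particular $\ker(C)=\{0\}$. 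As in the proof of Theorem \ref{thm:forward ite}, the Fourier coefficient identities $\langle U_i f, U_1^k U_2^j \var_{i'}\rangle$ yield the intertwining relations $TC=CU_1$ and $LC=CU_2$, and a direct computation gives $C\var_i=w_i$ for every $i\in I$. This is exactly the data of a similarity between $(\HH,T,L,\{w_i\}_{i\in I})$ and $(L^{2}(\TT^2,\ell^{2}(I)), U_1, U_2, \{\var_{i}\}_{i\in I})$.

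For the converse, suppose $(\HH,T,L,\{w_i\}_{i\in I})$ is similar to $(L^{2}(\TT^2,\ell^{2}(I)), U_1, U_2, \{\var_{i}\}_{i\in I})$ via a bounded isomorphism $D:L^{2}(\TT^2,\ell^{2}(I))\to \HH$ with $D\var_i=w_i$, $TD=DU_1$ and $LD=DU_2$. Then $T^k L^j w_i = D(U_1^k U_2^j \var_i)$ for all $k,j\in\Z$ and $i\in I$, so $\{T^k L^j w_i\}_{k,j,i}$ is the image of an orthonormal basis under a bounded isomorphism, hence a Riesz basis of $\HH$.

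There is no real obstacle: the proof is essentially transcribed from the unilateral argument, with $L^2(\TT,H^2_{\ell^2(I)})$ and $\widehat{S}$ replaced by $L^2(\TT^2,\ell^2(I))$ and $U_2$. The one thing worth being careful about is the identification of Riesz bases with bijective bounded synthesis operators, but this is classical. Uniqueness of the model need not be addressed, since the statement only claims similarity to the specific tuple with Hilbert space $L^2(\TT^2,\ell^2(I))$.
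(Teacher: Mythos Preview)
Your proposal is correct and follows essentially the same approach as the paper. The paper does not give a separate proof for the bilateral Riesz-tuple proposition (it is among the results in Section~\ref{L_invertible} stated without proof as analogous to the unilateral case), and the unilateral proof it does give is precisely your argument in compressed form: the Riesz condition forces $\ker(C)=\{0\}$, hence $\cN=\ker(C)^\perp$ is the whole space, and the proof of Theorem~\ref{thm:forward ite} (respectively Theorem~\ref{thm:bilateral ite}) then yields the similarity; the converse is declared ``straightforward,'' which is exactly your observation that the image of an orthonormal basis under a bounded isomorphism is a Riesz basis.
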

	
\section{Frames of orbits of a single vector}\label{sec:orbits-single-vector}

Let $\HH$ be a separable Hilbert space, $T,L\in \cB(\HH)$ such that  $T$ is invertible and $TL=LT$. Assume that $\HH,T, L$ and the index set $I=\{1,\dots,n\}$ are fixed and consider the set
\begin{equation}\label{eq:V_n}
	\cV_n:= \big\{ \{v_i\}_{i\in I }\subset\HH: (\HH, T, L, \{v_i\}_{i\in I}) \text{ is a unilateral (bilateral) frame-tuple}\big\}.
\end{equation}
Now, define an equivalent relation in $\cV_n$ given by $\{v_i\}_{i\in I} \sim \{w_i\}_{i\in I}$ if and only if $(\HH, T, L, \{v_i\}_{i\in I})$ is similar to $(\HH, T, L, \{w_i\}_{i\in I})$.

In this section we will prove that when $\# I=1$
%, in which case we will simply denote the set as $\cV_I=\cV$, 
there is only one equivalence class (whenever $\cV_1$ is not empty).  We will later show  that this is not true for multiple generators (see Remark \ref{rem:final-remark}). Specifically, for the unilateral case the claim read as follows.
\begin{theorem}\label{characterization_vectors}
	Let $\HH$ be a Hilbert space, $T,L\in\cB(\HH)$ such that $T$ is invertible and $LT=TL$ and consider the set 
	$$\cV:= \left\{ v\in\HH: (\HH, T, L, v) \text{ is a unilateral frame-tuple}\right\}.$$
	Assume that $w\in\cV.$ Then, $v\in\cV$ if and only if $(\HH, T, L, v)$ is similar to $(\HH, T, L, w)$,
	i.e. $$\cV= \left\{ Bw: B \in \cB(\HH),\,B \text{ is invertible and commutes with } T \text{ and }  L \right\}.$$
\end{theorem}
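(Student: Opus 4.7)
The forward containment is immediate from Lemma \ref{similar_frames}: any bounded invertible $B\in\cB(\HH)$ with $BT=TB$ and $BL=LB$ yields, via $v=Bw$, a similarity of tuples $(\HH,T,L,w)\to(\HH,T,L,v)$, so the frame property transfers and $v\in\cV$. The whole content of the theorem lies in the reverse direction: given $v,w\in\cV$, I need to produce such a $B$ with $Bw=v$.

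\textbf{Reduction to basic tuples.} My strategy is to pass both generators through their basic-tuple representations and build $B$ by composing the two model isomorphisms. Apply Corollary \ref{cor:forward 1generator} twice: there exist $U$-reducing and $\widehat{S}^*$-invariant subspaces $\cN_v,\cN_w\subseteq L^2(\TT,H^2)$ and bounded isomorphisms $D_v:\cN_v\to\HH$, $D_w:\cN_w\to\HH$ intertwining the operators and satisfying $D_v(\varphi_v)=v$, $D_w(\varphi_w)=w$, where $\varphi_v:=P_{\cN_v}1$ and $\varphi_w:=P_{\cN_w}1$. The linchpin of the argument is the identity
\begin{equation}
\cN_v=\cN_w. \tag{$*$}
\end{equation}
Granted $(*)$, the element $\varphi_v=P_{\cN_v}1=P_{\cN_w}1=\varphi_w$ is the same vector in both models, so $B:=D_v\circ D_w^{-1}\in\cB(\HH)$ is bounded and invertible, commutes with $T$ and $L$ by the intertwining properties of $D_v,D_w$, and satisfies $Bw=D_v(D_w^{-1}w)=D_v(\varphi_w)=D_v(\varphi_v)=v$, finishing the proof.

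\textbf{The crux: proving $(*)$.} This is the step that genuinely requires Helson's theorem, and I expect it to be the main obstacle. The isomorphism $\Psi:=D_w^{-1}\circ D_v:\cN_v\to\cN_w$ intertwines $U|_{\cN_v}$ with $U|_{\cN_w}$ and $A_v$ with $A_w$. Extend $\Psi$ by zero on $\cN_v^\perp$ to a bounded operator $\widetilde\Psi$ on $L^2(\TT,H^2)$; because $\cN_v,\cN_w$ are $U$-reducing, $\widetilde\Psi$ commutes with $U$. Helson's characterization of reducing subspaces provides measurable range functions $\la\mapsto \KK_v(\la),\KK_w(\la)$ into $S^*$-invariant (hence Beurling--type model) subspaces of $H^2$ such that
\begin{equation}
\cN_\bullet=\{f\in L^2(\TT,H^2):f(\la)\in\KK_\bullet(\la)\text{ a.e.}\},
\end{equation}
and the classical description of the commutant of the bilateral shift forces $\widetilde\Psi$ to act as pointwise multiplication by a measurable operator field $\psi(\la):H^2\to H^2$ carrying $\KK_v(\la)$ isomorphically onto $\KK_w(\la)$. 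The pointwise intertwining translates the identity $\Psi A_v=A_w\Psi$ into the statement that $\psi(\la)$ intertwines the compression of the unilateral shift on $\KK_v(\la)$ with that on $\KK_w(\la)$, i.e.\ the Sz.-Nagy--Foias model operators associated to the two fibers. Combining this with the uniqueness (up to unimodular constants) of the characteristic inner function of such a model operator --- so that $\KK_v(\la)$ and $\KK_w(\la)$ are not merely isomorphic but coincide as subspaces of $H^2$ --- one obtains $\KK_v(\la)=\KK_w(\la)$ for a.e.\ $\la$, which is exactly $(*)$. The delicate point I expect to be hardest is precisely this rigidity: passing from a measurable field of similarities between pointwise model operators to equality of the underlying $S^*$-invariant subspaces of $H^2$.
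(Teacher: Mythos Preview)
Your proposal is correct and follows essentially the same route as the paper: reduce both generators to their unique basic tuples via Corollary \ref{cor:forward 1generator}, form the intertwining isomorphism $\Psi$ between the model subspaces, extend by zero to an operator in the commutant of $U$, disintegrate via Helson's range functions, and then invoke Sz.-Nagy--Foias rigidity fiberwise to force $\cN_v=\cN_w$, after which $B:=D_v D_w^{-1}$ does the job. The only refinement worth noting is that the paper makes the fiberwise rigidity step precise via the $C_0$-contraction theory (\cite[Propositions 4.3 and 4.6, Chapter III]{NFBK}): the compressed shift on $(\phi(\la)H^2)^\perp$ is $C_0$ with minimal function $\phi(\la)$, and quasi-similar $C_0$ contractions share minimal functions, which is exactly the ``uniqueness of the characteristic inner function under similarity'' you allude to.
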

	
%This is not true for multiple generators  as the following example shows. % Indeed, we can see the next example.
%
%\begin{example}\label{ex:more-classes}
%	Suppose that $\cV_n$ is not empty for some $n>1$. 
%	Let $\{v_i\}_{i\in I } \in \cV_n$, that is, $(\HH,T,L,\{v_i\}_{i\in I})$ is a unilateral frame-tuple, and assume without loss of generality that $\{v_1, v_2\}$ is linearly independent. Then, it is easy to check that $\{v_i\}_{i\in I}\cup \{v_1+v_2\}$ and $\{v_i\}_{i\in I}\cup \{v_1-v_2\}$ belong to $\cV_{n+1}$. Suppose that the corresponding tuples $(\HH,T,L,\{v_i\}_{i\in I}\cup \{v_1+v_2\})$ and $(\HH,T,L,\{v_i\}_{i\in I}\cup \{v_1-v_2\})$ are similar via an isomorphism $C:\HH\to\HH$. Then, in particular, it must hold that 
%	$$C(v_1)=v_1,\; C(v_2)=v_2,\;\dots,\,C(v_1+v_2)=v_1-v_2$$
%	which is not possible.
%\end{example}

In order to prove Theorem \ref{characterization_vectors}, we will show that for $\#I=1$ a weaker condition than similarity is sufficient for two unilateral basic-tuples to be equal.

\begin{theorem}\label{similar_basic_tuples}
	For $i=1,2$, let $\cN_{i} \subseteq L^{2}(\TT,H^{2})$ be a reducing subspace for $U$ and invariant for $\widehat{S}^{*}$, and let $A_{i}=P_{\cN_{i}}\widehat{S}$. If there exists an isomorphism $\Psi:\cN_{1} \rightarrow \cN_{2}$ such that
	\begin{equation}\label{eq:intertwines}
	\Psi A_{1} = A_{2} \Psi \quad \text{and} \quad \Psi U|_{\cN_{1}} = U|_{\cN_{2}} \Psi,
	\end{equation}
	then $\cN_{1}=\cN_{2}$ and $A_{1} = A_{2}$. 
	 
\end{theorem}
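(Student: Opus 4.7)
The plan is to reduce the equality $\cN_1=\cN_2$ to a pointwise statement about scalar model spaces, exploiting fully that both $\cN_i$ are $U$-reducing. First I would extend $\Psi$ to a bounded operator $\widetilde{\Psi}$ on all of $L^{2}(\TT,H^{2})$ by declaring $\widetilde{\Psi}|_{\cN_1^{\perp}}=0$. Because $U$ respects the orthogonal decompositions $L^{2}(\TT,H^{2})=\cN_i\oplus\cN_i^{\perp}$, the hypothesis $\Psi U|_{\cN_1}=U|_{\cN_2}\Psi$ lifts to $\widetilde{\Psi} U=U\widetilde{\Psi}$ on the whole space. By the standard description of operators commuting with multiplication by $\lambda$ on $L^{2}(\TT,H^{2})$, $\widetilde{\Psi}$ must be an operator-valued multiplication: $(\widetilde{\Psi} f)(\lambda)=\Phi(\lambda)f(\lambda)$ for some bounded measurable $\Phi:\TT\to\cB(H^{2})$. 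Extending $\Psi^{-1}$ in the same way produces a second multiplication symbol $\Theta$, and the identities $M_{\Theta}M_{\Phi}=P_{\cN_1}$ and $M_{\Phi}M_{\Theta}=P_{\cN_2}$ force $\Phi(\lambda)$ to vanish on the fiber of $\cN_1^{\perp}$, take values in the fiber of $\cN_2$, and restrict to a bounded invertible operator between these fibers for a.e.\ $\lambda$.

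Next I would invoke Helson's theorem: the $U$-reducing subspace $\cN_i$ is characterized by a measurable range function $J_i$ so that $\cN_i=\{f\in L^{2}(\TT,H^{2}):f(\lambda)\in J_i(\lambda)\text{ a.e.}\}$. The $\widehat{S}^{*}$-invariance of $\cN_i$ translates to $S^{*}J_i(\lambda)\subseteq J_i(\lambda)$ for a.e.\ $\lambda$, and Beurling's theorem then identifies $J_i(\lambda)=H^{2}\ominus\theta_i(\lambda)H^{2}$ for a measurable inner-valued map $\theta_i$. With this parametrization in hand, writing out the remaining intertwining $\Psi A_1=A_2\Psi$ together with the fiberwise action of $\widehat{S}$ yields, for a.e.\ $\lambda$, a bounded invertible operator $\phi_{\lambda}:=\Phi(\lambda)|_{J_1(\lambda)}:J_1(\lambda)\to J_2(\lambda)$ such that $\phi_{\lambda}\,S_{\theta_1(\lambda)}=S_{\theta_2(\lambda)}\,\phi_{\lambda}$, where $S_{\theta_i(\lambda)}:=P_{J_i(\lambda)}S|_{J_i(\lambda)}$ is the compressed shift on the scalar model space $J_i(\lambda)$.

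To close the argument I would appeal to the classical result from model-space theory that two compressed shifts on scalar model spaces are similar if and only if the underlying inner functions differ by a unimodular constant, equivalently, the model spaces coincide. Applied pointwise at a.e.\ $\lambda$, this gives $J_1(\lambda)=J_2(\lambda)$, and hence $\cN_1=\cN_2$; the equality $A_1=A_2$ follows immediately since both operators are the compression of $\widehat{S}$ to the same subspace.

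The main obstacle is the last step: the similarity theorem for scalar model operators, while classical, is nontrivial, and it is precisely here that the scalar nature ($\#I=1$) is essential. In the vector-valued setting similar model operators may have inequivalent characteristic functions, which is the structural reason the analogous statement is known to fail for several generators, as the authors point out in the introduction.
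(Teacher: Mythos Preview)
Your proposal is correct and follows essentially the same route as the paper: extend $\Psi$ by zero, use commutation with $U$ to realize it as an operator-valued multiplication, pass to fibers via Helson/Beurling to obtain a similarity between the compressed shifts $S_{\theta_1(\lambda)}$ and $S_{\theta_2(\lambda)}$, and then invoke the classical rigidity result for scalar model operators. The paper phrases the last step in the language of Nagy--Foias $C_{0}$ contractions and minimal functions (\cite[Chapter III, Propositions 4.3 and 4.6]{NFBK}), which is the same fact you cite, and it obtains the fiberwise invertibility of $\Phi(\lambda)$ from \cite[Lemma 4.2]{BI} rather than by extending $\Psi^{-1}$ separately, but these are cosmetic differences.
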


Before we prove Theorem \ref{similar_basic_tuples} we need to recall some notions on the structure of reducing subspaces of $L^{2}(\TT, \KK)$, where $\KK$ is a separable Hilbert space. For more details on this topic, we refer the reader to \cite{BR} where the $U$-reducing subspaces of $L^{2}(\TT,\KK)$ are the multiplicative-invariant subspaces of $L^{2}(\TT,\KK)$ with respect to the determining set $\{\gamma^{j} \}_{j\in \Z}$, with $\gamma(\la)= \la$ a.e $\la\in \TT$, see \cite[Definition 2.2 and 2.3]{BR}.

Helson proved in \cite{He} that $U$-reducing subspaces of $L^{2}(\TT,\KK)$ can be characterized in terms of range functions. Later, Bownik and Ross gave in \cite[Theorem 2.4]{BR} an extended version of this result. 

A {\it range function} in $\KK$ is a mapping $J:\TT \rightarrow \{\text{closed subspaces of } \KK\}$. It is measurable if for each $x,y\in\KK$, the complex-valued function $\lambda \mapsto \langle P_{J(\lambda)}x,y \rangle$ is measurable.

\begin{theorem}{\cite[Theorem 2.4]{BR}}\label{thm:MI-spaces}\label{range}
	Let $\cM$ be a closed subspace of $L^{2}(\TT,\KK)$. The following are equivalent:
	\begin{enumerate}[label=\roman*),ref=\roman*)]
		\item $\cM$ is $U$-reducing,
		
		\item there exists a measurable range function such that 
		$$\cM = \{f\in L^{2}(\TT,\KK) : f(\lambda)\in J(\lambda) \textrm{ for a.e. } \lambda\in\TT\}.$$
	\end{enumerate}
	
	The correspondence between $U$-reducing subspaces and measurable range functions is one-to-one and onto, assuming that range functions which are equal almost everywhere are identified.

	Moreover, when $\cM$ is $U$-reducing, since $L^{2}(\TT, \KK)$ is separable, there is a countable set $\cA \subset L^{2}(\TT,\KK)$
	such that $\cM = \overline{\spn}\{U^{k}f : f \in \cA, k \in\Z\}$. Then, the
	measurable range function $J$ associated to $\cM$ is given by 
	\begin{equation}\label{range_func}
		J(\lambda) = \overline{ \spn}\{f(\lambda): f\in  \cA \}.
	\end{equation}
	
\end{theorem}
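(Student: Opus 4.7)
The plan is to reduce the theorem to a pointwise rigidity statement for compressed shifts on scalar model spaces in $H^2$, via Helson's range function characterization (Theorem~\ref{range}). I would first apply Theorem~\ref{range} with $\KK = H^2$ to obtain measurable range functions $J_i : \TT \to \{\text{closed subspaces of } H^2\}$ with $\cN_i = \{f \in L^2(\TT, H^2) : f(\la) \in J_i(\la) \text{ for a.e. } \la\}$. The $\widehat{S}^*$-invariance of $\cN_i$, combined with \eqref{range_func}, forces $J_i(\la)$ to be $S^*$-invariant for a.e. $\la$; equivalently, $J_i(\la)^\perp \subseteq H^2$ is $S$-invariant, so Beurling's theorem provides a measurable selection of inner functions $\theta_i(\la)$ with $J_i(\la) = K_{\theta_i(\la)} := H^2 \ominus \theta_i(\la) H^2$ (using the conventions $\theta_i(\la)=1$ when $J_i(\la)=\{0\}$ and $\theta_i(\la)=0$ when $J_i(\la)=H^2$).

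Next, I would disintegrate $\Psi$ into fibres. Since $\Psi$ intertwines $U|_{\cN_1}$ with $U|_{\cN_2}$, and therefore commutes with multiplication by every trigonometric polynomial in $\la$, the theory of multiplicatively invariant subspaces (see \cite{BR}) produces a measurable field $\{\Psi(\la)\}_{\la\in\TT}$ of bounded operators $\Psi(\la) : J_1(\la) \to J_2(\la)$ with $(\Psi f)(\la) = \Psi(\la) f(\la)$ a.e., and applying the same procedure to $\Psi^{-1}$ shows each $\Psi(\la)$ is an isomorphism. Since $A_i$ acts fibrewise via $(A_i f)(\la) = P_{J_i(\la)} S(f(\la))$, the hypothesis $\Psi A_1 = A_2 \Psi$ becomes the pointwise intertwining
\begin{equation}
\Psi(\la)\, S_{\theta_1(\la)} = S_{\theta_2(\la)}\, \Psi(\la)\quad\text{a.e.,}
\end{equation}
where $S_{\theta_i(\la)} := P_{K_{\theta_i(\la)}} S|_{K_{\theta_i(\la)}}$ is the model shift on $K_{\theta_i(\la)}$.

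The core of the argument is a pointwise rigidity: at any $\la$ where $\Psi(\la)$ is an invertible intertwiner of $S_{\theta_1(\la)}$ and $S_{\theta_2(\la)}$, one must have $K_{\theta_1(\la)} = K_{\theta_2(\la)}$. Dropping $\la$ from the notation and assuming both $K_{\theta_i}$ are nontrivial, I would extend the intertwining through the $H^\infty$ functional calculus for the completely non-unitary contractions $S_{\theta_i}$ \textemdash\ given by $g(S_{\theta_i})x = P_{K_{\theta_i}}(gx)$ for $x\in K_{\theta_i}$ and $g\in H^\infty$ \textemdash\ to obtain $\Psi\, g(S_{\theta_1}) = g(S_{\theta_2})\,\Psi$ for every $g\in H^\infty$. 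Taking $g=\theta_1$ and using $\theta_1 K_{\theta_1} \subseteq \theta_1 H^2 = K_{\theta_1}^\perp$ (so that $\theta_1(S_{\theta_1})=0$) gives $\theta_1(S_{\theta_2})\Psi = 0$; surjectivity of $\Psi$ then forces $\theta_1(S_{\theta_2})=0$, i.e.\ $\theta_1 x \in \theta_2 H^2$ for every $x\in K_{\theta_2}$. Decomposing $1 = P_{K_{\theta_2}}1 + (1-P_{K_{\theta_2}}1)$ with $1-P_{K_{\theta_2}}1 \in \theta_2 H^2$ yields $\theta_1 \in \theta_2 H^2$, i.e.\ $\theta_2 \mid \theta_1$. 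The same argument applied to $\Psi^{-1}$ gives $\theta_1\mid\theta_2$, hence $\theta_1 = c\,\theta_2$ for some unimodular constant $c$ and therefore $K_{\theta_1} = K_{\theta_2}$. Gathering the fibres, $J_1(\la) = J_2(\la)$ a.e., so by Theorem~\ref{range} $\cN_1 = \cN_2$, and $A_1 = A_2$ is then immediate.

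The main obstacle will be the measurable disintegration of $\Psi$ together with a measurable selection of the inner functions $\theta_i(\la)$, so that the pointwise rigidity can be invoked almost everywhere; the $H^\infty$ functional calculus and divisibility argument themselves are clean once the setup is in place. The degenerate fibres where $K_{\theta_i(\la)} \in \{\{0\}, H^2\}$ are handled separately by invertibility of $\Psi(\la)$: if $K_{\theta_1(\la)}=\{0\}$ then $K_{\theta_2(\la)}=\{0\}$ as well, and if $K_{\theta_1(\la)}=H^2$ then $S_{\theta_1(\la)}=S$ is not annihilated by any nonzero $H^\infty$ function, which via the same calculus rules out any proper $\theta_2(\la)$.
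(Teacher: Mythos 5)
You should first note a mismatch: the statement quoted above is Theorem \ref{range}, the Helson/Bownik--Ross range-function characterization, which the paper imports from \cite{BR} without proof (and which your argument \emph{uses} as a tool). What you have actually written is a proof of Theorem \ref{similar_basic_tuples}, the rigidity statement for an isomorphism intertwining the compressed shifts on two $U$-reducing, $\widehat{S}^{*}$-invariant subspaces of $L^{2}(\TT,H^{2})$; I compare your proposal against the paper's proof of that theorem.

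Your argument is correct and agrees with the paper's up to the fiberwise reduction: both proofs describe $\cN_i$ through a measurable field of model spaces $J_i(\la)=(\phi_i(\la)H^{2})^{\perp}$ with $\phi_i(\la)$ inner --- the measurable selection of inner functions, which you rightly flag as the main obstacle, is exactly what the paper's Theorem \ref{thm:char_Mred_L2(T,H2)} (i.e.\ \cite[Corollary 3.10]{ACCP22}) supplies --- then disintegrate $\Psi$ into fibre isomorphisms via the commutant of $U$ (\cite[Corollary 3.19]{RR} together with Lemma \ref{lem:prop_hatF}\ref{lem:restrictionF_isom}, which is precisely your claim that each $\Psi(\la)$ is an isomorphism), and identify $A_i$ with the field $\la\mapsto P_{J_i(\la)}S$ using Helson's formula \eqref{proj_Helson}. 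The two proofs diverge only at the final pointwise step. The paper treats it as a black box: $P_{J_i(\la)}S|_{J_i(\la)}$ is a $C_0$ contraction with minimal function $\phi_i(\la)$, and quasi-similar $C_0$ contractions have the same minimal function \cite[Ch.~III, Props.~4.3 and 4.6]{NFBK}. You instead prove the needed special case directly: extend the intertwining through the $H^{\infty}$ functional calculus, use $\theta_1(S_{\theta_1})=0$ and surjectivity to force $\theta_1(S_{\theta_2})=0$, and extract $\theta_2\mid\theta_1$ by evaluating at $1=P_{K_{\theta_2}}1+(1-P_{K_{\theta_2}}1)$; symmetry gives equality up to a unimodular constant. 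This is in essence the proof of the relevant half of \cite[Prop.~4.6]{NFBK} specialized to compressed shifts: it is more self-contained (you never need minimality, only that $\theta$ annihilates $S_{\theta}$, and in fact only dense range of $\Psi$ rather than surjectivity), at the cost of having to justify that the intertwining passes from polynomials to all of $H^{\infty}$ via weak-$*$ density and the continuity of the Sz.-Nagy--Foias calculus for completely non-unitary contractions. Your treatment of the degenerate fibres $J_i(\la)\in\{\{0\},H^{2}\}$ is also sound.
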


In \cite{He}, Helson also proved the following property in terms of projections: if $\cM$ is $U$-reducing and $J$ is its range function, then
\begin{equation}\label{proj_Helson}
	P_{\cM}f(\la) = P_{J(\la)}(f(\la)),\quad \text{a.e. } \la\in \TT,\, f\in L^2(\TT,\KK).
\end{equation}

For proving Theorem \ref{similar_basic_tuples} we need a characterization of subspaces of $L^{2}(\TT,H^{2})$  that are reducing for $U$ and invariant for $\widehat{S}^{*}$, that we include below. Its  proof can be found in \cite[Corollary 3.10]{ACCP22}. Recall that a function $h\in H^{2}$ is {\it inner} if $|h(z)|=1$ a.e. $z\in \TT$. 

\begin{theorem}\label{thm:char_Mred_L2(T,H2)}
	Let $\cN \subseteq L^{2}(\TT,H^{2})$ be a closed subspace. The following statements are equivalent:
	\begin{enumerate}[label=\roman*),ref=\roman*)]
		\item  $\cN$ is $U$-reducing and $\widehat{S}^{*}$-invariant,
		
		\item There exists $\phi\in L^{2}(\TT,H^{2})$ such that $\phi(\la)$ is an inner function for a.e. $\la\in\sigma(\cN^{\perp})$ and $\cN^{\perp} = \phi L^{2}(\TT,H^{2}) := \left\{ \phi f : f\in L^{2}(\TT,H^{2}) \right\}$, where $\sigma(\cN^{\perp}) = \{\la \in\TT : J_{\cN^{\perp}}(\la) \neq \{0\}\}$.
	\end{enumerate}
\end{theorem}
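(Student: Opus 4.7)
The strategy is to pass to the orthogonal complement $\cM:=\cN^{\perp}$ and reduce to a fiberwise application of Beurling's theorem on $H^{2}$. Condition (i) for $\cN$ is equivalent to $\cM$ being $U$-reducing and $\widehat S$-invariant, and condition (ii) becomes $\cM=\phi L^{2}(\TT,H^{2})$ with $\phi(\la)$ inner on $\sigma(\cM)$. The implication (ii)$\Rightarrow$(i) is routine: multiplication by an inner $\phi(\la)$ is an isometry of $H^{2}$, so $\phi L^{2}(\TT,H^{2})$ is closed, and the pointwise identities $U(\phi f)=\phi(Uf)$, $U^{*}(\phi f)=\phi(U^{*}f)$, and $\widehat S(\phi f)=\phi(\widehat S f)$ give $U$-reducibility and $\widehat S$-invariance.

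For (i)$\Rightarrow$(ii), I would first invoke Theorem \ref{range} to obtain a measurable range function $J:\TT\to\{\text{closed subspaces of }H^{2}\}$ with $\cM=\{f\in L^{2}(\TT,H^{2}):f(\la)\in J(\la)\text{ a.e.}\}$. Because $\widehat S$ acts pointwise as $S$, the $\widehat S$-invariance of $\cM$ translates, via a countable generating family $\cA\subset\cM$ and the formula \eqref{range_func}, into $S(J(\la))\subseteq J(\la)$ for a.e.\ $\la$. Scalar Beurling's theorem then yields, for a.e.\ $\la\in\sigma(\cM)$, an inner function $\phi_{\la}\in H^{2}$ (unique up to a unimodular constant) with $J(\la)=\phi_{\la}H^{2}$, while $J(\la)=\{0\}$ off $\sigma(\cM)$.

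The main obstacle is to patch the fiberwise inner functions $\phi_{\la}$ into a single measurable $\phi\in L^{2}(\TT,H^{2})$. My plan is to exploit the wandering subspace $W(\la):=J(\la)\ominus SJ(\la)=\CC\cdot\phi_{\la}$, which is one-dimensional on $\sigma(\cM)$. The closed subspace $\cM\ominus\widehat S\cM$ is $U$-reducing (since $\widehat S$ is an isometry on $L^{2}(\TT,H^{2})$, commutes with $U$, and $\cM$ is $U$-reducing), so by \eqref{proj_Helson} its range function is precisely $W$. In particular $W$ is measurable, and a standard projection-and-normalization against elements from a countable dense subset of $H^{2}$ produces a measurable unit-norm selection $\la\mapsto\phi_{\la}\in W(\la)$ on $\sigma(\cM)$, which I extend by $0$ elsewhere. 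Since $\sigma(\cM)\subseteq\TT$ has finite measure and $\|\phi(\la)\|_{H^{2}}\le 1$, automatically $\phi\in L^{2}(\TT,H^{2})$.

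Finally, to verify $\cM=\phi L^{2}(\TT,H^{2})$: the inclusion $\phi L^{2}(\TT,H^{2})\subseteq\cM$ is immediate from $\phi(\la)H^{2}=J(\la)$ on $\sigma(\cM)$. For the reverse, given $f\in\cM$, I define $h(\la):=\overline{\phi_{\la}}\,f(\la)$, which lies in $H^{2}$ a.e.\ because $|\phi_{\la}(z)|=1$ pointwise and $f(\la)\in\phi_{\la}H^{2}$. Then $f=\phi h$ and $\|h(\la)\|_{H^{2}}=\|f(\la)\|_{H^{2}}$ a.e., so $h\in L^{2}(\TT,H^{2})$, completing the decomposition.
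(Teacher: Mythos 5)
The paper does not actually prove Theorem \ref{thm:char_Mred_L2(T,H2)}; it imports it from \cite[Corollary 3.10]{ACCP22}, so there is no in-paper argument to compare against. Your proof is correct and is, in substance, the argument used in that reference (there carried out in higher multiplicity via Beurling--Lax--Halmos; you specialize to multiplicity one, where scalar Beurling suffices): pass to $\cM=\cN^{\perp}$, use the Helson/Bownik--Ross range function, observe that $\widehat S$-invariance localizes to $S$-invariance of the fibers via a countable generating family, apply Beurling fiberwise, and solve the measurable-selection problem by identifying the wandering fibers $J(\la)\ominus SJ(\la)$ as the range function of the $U$-reducing subspace $\cM\ominus\widehat S\cM$. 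Two small points you assert rather than prove, both routine: (1) the identification $J_{\widehat S\cM}(\la)=S\,J_{\cM}(\la)$ (needed to say the range function of $\cM\ominus\widehat S\cM$ is $W$), which follows from \eqref{range_func} applied to the generating family $\widehat S\cA$ together with the fact that an isometry maps closed spans to closed spans; and (2) that your unit-norm measurable selection in $W(\la)=\CC\phi_{\la}$ is automatically inner, which holds because $\|\phi_{\la}\|_{H^{2}}=1$ for inner $\phi_{\la}$, so any unit vector in that line is a unimodular multiple of $\phi_{\la}$. With those remarks made explicit, the argument is complete.
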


Morever, it can be proved that the measurable range function associated to a $U$-reducing subspace of the form $\cM = \phi L^{2}(\TT,H^{2})$ for $\phi\in L^{2}(\TT,H^{2})$ is given by $J_{\cM}(\la) = \phi(\la) H^{2}$ for a.e. $\la\in\TT$ (see \cite[Proposition 3.11]{ACCP22}).

\begin{remark}
A note about the terminology. If $H^2_\KK$ is the Hardy space of $\KK$-valued functions, and 
$\mathcal{Q}\subset  H^2_\KK$ is a non-trivial closed subspace, invariant by $S^*$, the adjoint of the unilateral shift $S$,
then $\mathcal{Q}$ is called a {\it model space}. These spaces had proven to be very important in operator theory since the
compression of the shift on a model space serves as a {\it model } for a certain class of contractions. (See \cite{GMR15} for a 
very comprehensive and detailed introduction to the theory of model spaces).

The Hilbert space  $\cN$ of a basic tuple is a subspace of  $L^{2}(\TT,H^{2}_\KK)$ that is $U$-reducing and $\widehat{S}^{*}$-invariant. After Theorem \ref{range}, we know that $\cN$ has a range function. The invariance under $\widehat{S}^{*}$ tells us that  the  values of the range function are subspaces of $H^2_\KK$ invariant under $S^*$, that is, they are model spaces.

Thus, $\cN$ has a range function whose values are model spaces or, in the terminology of direct integrals, it is the direct integral over $\TT$ of model spaces. This connection justifies calling the space $\cN$, in the $L^2$-context, a model space.
\end{remark}

Another ingredient that we need for the proof of Theorem \ref{similar_basic_tuples} is the concept of {\it operator-valued function}. We give now the definition and some properties.

	An operator-valued function in $\KK$ is a function $F:\TT\to \cB(\KK)$. It is said to be measurable if for every $x\in \KK$, $\lambda\mapsto F(\lambda)(x)$ is measurable. The norm of an operator-valued function in $\KK$ is defined as $\|F\|_{\infty} = {\rm ess\,sup}_{\la\in\TT} \|F(\la)\|_{op}$.
	
	If $F:\TT\to \cB(\KK)$ is a measurable operator-valued function in $\KK$ such that $\|F\|_{\infty}<\infty$, we denote by $\widehat{F}: L^{2}(\TT,\KK) \to L^{2}(\TT,\KK)$ the operator defined by
	$$\widehat{F}f(\la) = F(\la)f(\la), \quad \text{ for a.e. } \la\in\TT, \,\,f\in L^{2}(\TT,\KK)$$
and by $\cF$ the class of all measurable functions $F:\TT\to \cB(\KK)$ such that $\|F\|_{\infty}< \infty$, and $\widehat{\cF} = \{\widehat{F}: F\in\mathcal{F}\}$. 
In \cite[Theorem 3.17]{RR}, it is shown that the correspondence $F\mapsto \widehat{F}$ is an adjoint-preserving algebra isomorphism. Moreover, $\widehat{F}$ is normal  (self-adjoint, unitary or a projection), if and only if $F(\la)$ is normal (self-adjoint, unitary or a projection) for a.e. $\la\in\TT$.
Also, in \cite[Corollary 3.19]{RR} the authors showed that the commutant of $U$ acting on $L^{2}(\TT,\KK)$ is $\widehat{\cF}$. 

We will make use of  the following properties of operator-valued functions and reducing subspaces for $U$ acting on $L^{2}(\TT,\KK)$ whose proofs follows from \cite[Theorem 4.1 and Lemma 4.2]{BI}.

\begin{lemma}\label{lem:prop_hatF}	
	Let $\cM, \cN \subseteq L^2(\TT,\KK)$ be  reducing subspaces for $U$ with range functions $J_\cM, J_\cN$ respectively, and let $\widehat F\in \widehat{\cF}$. Then, we have: 
	\begin{enumerate}[label=\roman*),ref=\roman*)]
				
		\item If $\widehat F|_\cM=0$, then, $F(\la)|_{J_\cM(\la)}=0$ for a.e. $\la\in\TT$.
		
		\item\label{lem:restrictionF_isom} If $\widehat F|_\cM:\cM\to  \cN$ is an isomorphism, then so is $F(\la)|_{J_\cM(\la)}: J_\cM(\la)\to J_{\cN}(\la)$, 
		for a.e. $\la\in\TT$.
		
	\end{enumerate}
	
\end{lemma}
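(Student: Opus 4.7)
The plan is to handle the two parts separately, using the range function representation from Theorem \ref{range}, the pointwise formula $P_\cM = \widehat{P_{J_\cM}}$ implicit in \eqref{proj_Helson}, and the commutant identification $\cB(L^2(\TT,\KK))^U = \widehat{\cF}$ cited from \cite{RR}.

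For (i), I would invoke \eqref{range_func} to pick a countable $\cA \subset L^2(\TT,\KK)$ with $\cM = \overline{\spn}\{U^k f : f\in\cA,\, k\in\Z\}$ and $J_\cM(\la) = \overline{\spn}\{f(\la) : f\in\cA\}$ for a.e.\ $\la$. The hypothesis applied to each $U^k f$ gives $F(\la)\la^k f(\la) = 0$ a.e., hence $F(\la)f(\la) = 0$ a.e., for every $f \in \cA$. A single countable union of null sets leaves a full-measure set on which $F(\la)$ annihilates every generator $f(\la)$; continuity of $F(\la)$ then extends this to all of $J_\cM(\la)$.

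For (ii), the argument splits into three steps. \emph{Step 1 (range inclusion):} since $\widehat F(\cM) \subseteq \cN$, the same countable-generator argument as in (i) yields $F(\la)\,J_\cM(\la) \subseteq J_\cN(\la)$ a.e. \emph{Step 2 (bounded below):} from $\|\widehat F f\| \geq c\|f\|$ on $\cM$ one gets the operator inequality $P_\cM \widehat F^{*}\widehat F P_\cM \geq c^{2} P_\cM$ on $L^{2}(\TT,\KK)$. Writing both sides via the adjoint- and order-preserving isomorphism $F\mapsto \widehat F$ as $\widehat{P_{J_\cM} F^{*}F P_{J_\cM}} \geq c^{2}\widehat{P_{J_\cM}}$, this passes pointwise to
\begin{equation}
P_{J_\cM(\la)}\,F(\la)^{*}F(\la)\,P_{J_\cM(\la)} \geq c^{2}\,P_{J_\cM(\la)} \quad \text{a.e. }\la,
\end{equation}
so $\|F(\la)x\|\geq c\|x\|$ for every $x\in J_\cM(\la)$, a.e. \emph{Step 3 (surjectivity):} let $G:\cN\to\cM$ be the bounded inverse of $\widehat F|_\cM$ and extend it to $\widetilde G := G P_\cN \in \cB(L^{2}(\TT,\KK))$. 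Using that both $\cM$ and $\cN$ are $U$-reducing, a short check shows $\widetilde G U = U\widetilde G$: for $f\in\cN$, both $U G f$ and $G U f$ lie in $\cM$ and are mapped by $\widehat F$ to $U f$, so injectivity of $\widehat F|_\cM$ (Step 2) forces equality; on $\cN^{\perp}$ both sides vanish. The commutant identification then gives $\widetilde G = \widehat{G_0}$ for some $G_0\in\cF$. Applying Step 1 to $G_0$ (in the role of $F$) yields $G_0(\la)\,J_\cN(\la)\subseteq J_\cM(\la)$ a.e., while $\widehat F \widehat{G_0}|_\cN = I_\cN$ reads pointwise as $F(\la)G_0(\la) y = y$ for every $y\in J_\cN(\la)$ a.e. Combining Steps 1--3, $F(\la)|_{J_\cM(\la)}:J_\cM(\la)\to J_\cN(\la)$ is a bounded bijection with bounded two-sided inverse $G_0(\la)|_{J_\cN(\la)}$, i.e.\ an isomorphism.

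The main obstacle is Step 3: the abstract inverse $G$ is only defined on $\cN$, and to obtain pointwise surjectivity one needs to realize its zero-extension to $L^{2}(\TT,\KK)$ as an operator-valued multiplication operator. This is exactly where the commutant identification $\cB(L^{2}(\TT,\KK))^{U} = \widehat{\cF}$ does the heavy lifting; the translation of the scalar operator inequality in Step 2 into a pointwise inequality on the values of the range function plays the analogous role for injectivity.
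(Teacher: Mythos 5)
Your proof is correct, and it is worth noting that the paper does not actually prove this lemma: it defers entirely to \cite[Theorem 4.1 and Lemma 4.2]{BI}, whose underlying machinery (range functions, Helson's pointwise projection formula \eqref{proj_Helson}, and the identification of the commutant of $U$ with the decomposable operators $\widehat{\cF}$) is exactly what you redeploy. Your part (i) and Steps 1 and 3 are airtight; in particular the trick of zero-extending the abstract inverse $G$ to $\widetilde G = GP_\cN$, checking it commutes with $U$ via injectivity of $\widehat F|_\cM$, and then realizing it as $\widehat{G_0}$ is precisely the right move, and combined with part (i) applied to $\widehat{FG_0}-P_\cN$ on $\cN$ it delivers pointwise surjectivity. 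The only place that deserves one more line is Step 2: passing from $\widehat{P_{J_\cM}F^*FP_{J_\cM}} \geq c^2\,\widehat{P_{J_\cM}}$ to the fiberwise inequality requires knowing that a positive decomposable operator has positive fibers a.e., which is not literally among the properties quoted from \cite[Theorem 3.17]{RR} (normal, self-adjoint, unitary, projection); it is standard (test against $\chi_E x$ for measurable $E\subseteq\TT$ and $x$ in a countable dense subset of $\KK$, or take the square root inside the commutant), but should be said. Alternatively, Step 2 can be dropped altogether: applying part (i) to $\widehat{G_0F}-P_\cM$ restricted to $\cM$ gives $G_0(\la)F(\la)x=x$ for all $x\in J_\cM(\la)$ a.e., so $G_0(\la)|_{J_\cN(\la)}$ is a genuine two-sided bounded inverse and injectivity comes for free, making the argument slightly shorter and avoiding the positivity issue entirely.
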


Now we are ready to prove Theorem \ref{similar_basic_tuples}.
	
\begin{proof}[Proof of Theorem \ref{similar_basic_tuples}]
	Since $\cN_{i} \subseteq L^{2}(\TT,H^{2})$ for $i=1,2$ is $U$-reducing and $\widehat{S}^{*}$-invariant, then Theorem \ref{thm:char_Mred_L2(T,H2)} says that there exists $\phi_{i}\in L^{2}(\TT,H^{2})$ such that $\phi_{i}(\la)$ is an inner function for a.e. $\la\in\sigma(\cN_{i}^{\perp})$ and $\cN_{i} = ( \phi_{i} L^{2}(\TT,H^{2}) )^{\perp}$. Thus, its range function $J_{i}$ is given by 
	$$J_{i}: \la \mapsto J_{i}(\la)=( \phi_{i}(\la)H^{2} )^{\perp}.$$
	
	To conclude that $\cN_1=\cN_2$, and hence  $A_1=A_2$,  it is sufficient to show that $\phi_{1}=\phi_{2}$.
		
	Observe first that if we extend $\Psi$ as zero on $\cN_{1}^{\perp}$, from the second equation in \eqref{eq:intertwines} we get that this extension commutes with $U$ and therefore, by  \cite[Corollary 3.19]{RR}, there is $\widehat{F} \in \widehat{\cF}$ such that $\widehat{F} = \Psi$ on $\cN_{1}$ and $\widehat{F} = 0$ on $\cN_{1}^{\perp}$. Moreover, since $\widehat{F}|_{\fN_{1}}=\Psi : \fN_{1} \rightarrow \cN_{2}$ is an isomorphism, \ref{lem:restrictionF_isom} of Lemma \ref{lem:prop_hatF} implies that $F(\la)|_{J_{1}(\la)}: J_{1}(\la) \rightarrow J_{2}(\la)$ is an isomorphism for a.e. $\la \in \TT$.
		
	On the other hand, the first equation in \eqref{eq:intertwines} can be rewritten in terms of $\widehat{F}|_{\fN_{1}}$ as
	\begin{equation}\label{eq:restriccion_Fhat}
		\widehat{F}|_{\fN_{1}} A_{1} = A_{2} \widehat{F}|_{\fN_{1}}.		
	\end{equation}
		
	We want to see now that $A_{1}$ and $A_{2}$ belong to $\widehat{\cF}$. To do this we define for a.e. $\lambda\in\TT$ and for $i=1,2$ the function $G_{i}:\TT\rightarrow \cB(H^{2})$ which for every $\la\in \TT$ is defined as the operator $G_{i}(\la) := P_{J_{i}(\la)} S $. Then, $\la\mapsto G_{i}(\la)$ is measurable since so is $\la\mapsto P_{J_{i}(\la)}$ and $\|G_{i}\|_{\infty} = \esssup_{\la\in\TT} \|G_{i}(\la)\|_{op} = \esssup_{\la\in\TT} \| P_{J_{i}(\la)} S \|_{op} \leq 1 <\infty$. This proves that $G_{i}\in \cF$. 
	
	Besides, for $i=1,2$ we have that $\widehat{G}_{i}$ and $A_{i}$ coincide. This is a consequence of \eqref{proj_Helson} as  the following computation shows: 
	\begin{equation}
		\widehat{G}_{i}f(\la) = G_{i}(\la) (f(\la)) = P_{J_{i}(\la)} (Sf(\la)) = P_{J_{i}(\la)} ((\widehat S f)(\la))  = P_{\cN_{i}} (\widehat{S}f)(\la)
		= A_{i}f (\la)
	\end{equation}
	for every $f\in L^{2}(\TT,H^{2})$ and for a.e $\la\in \TT$. 
		
	Hence, for a.e. $\la\in\TT$ it is obtained from \eqref{eq:restriccion_Fhat} that
	\begin{equation}\label{eq_fibers}
	\begin{split}
	F(\la)|_{J_{1}(\la)} G_{1}(\la) = G_{2}(\la) F(\la)|_{J_{1}(\la)}.
	\end{split}
	\end{equation}

	The equation above says that $F(\la)|_{J_{1}(\la)}$ intertwines $G_{1}(\la)$ and $G_{2}(\la)$ which are two compressions of the unilateral shift operator acting in the subspaces $J_{1}(\la) = (\phi_{1}(\la)H^{2} )^{\perp}$ and $J_{2}(\la)=( \phi_{2}(\la)H^{2} )^{\perp}$ respectively.
	
	At this point we use some results about {\it quasi-similar $C_{0}$ contractions} and {\it minimal functions} which are developed in \cite[Chapter III]{NFBK}.

	First, we observe that by \cite[Proposition 4.3, Chapter III]{NFBK}, for $i=1,2$ and for a.e. $\la\in\TT$, the operator $G_{i}(\la)$ is a $C_{0}$ contraction whose minimal function is $\phi_{i}(\la)$. Second, since $G_{1}(\la)$ and $G_{2}(\la)$ are quasi-similar, i.e, there exists a bounded isomorphism that intertwines $G_{1}(\la)$ and $G_{2}(\la)$ for a.e. $\la\in \TT$, then by \cite[Proposition 4.6, Chapter III]{NFBK} we get that the inner functions $\phi_{1}(\la)$ and $\phi_{2}(\la)$ coincide a.e. $\la \in \TT$ and therefore $\phi_{1}=\phi_{2}$. Consequently, from the definitions of $\cN_{i}$ and $A_{i}$ for $i=1,2$, the result follows.
	\end{proof}
	
%\begin{remark}\label{rem:unicity}\noindent
%	\begin{enumerate}	
%	\item[\rm i)]\label{RM1}Theorem \ref{similar_basic_tuples} implies in particular that if two basic unilateral frame-tuples with one generator are similar, then they have to be equal. To see this, if $(\cN_{1},U,A_{1},\varphi_{1})$ is similar to $(\cN_{2},U,A_{2},\varphi_{2})$, there is an isomorphism $C:\cN_{1}\to \cN_{2}$ such that
%	$$C A_{1} = A_{2} C, \quad CU|_{\cN_{1}} = U|_{\cN_{2}} C,\quad \text{ and } \quad C\varphi_{1} = \varphi_{2}.$$
%	
%	By Theorem \ref{similar_basic_tuples}, the first two equations imply that $\cN_{1}=\cN_{2}$ and $A_{1}=A_{2}$. Moreover, if we recall the definition $\varphi_{i}:=P_{\cN_{i}}1$, $i=1,2$, it follows immediately that $\varphi_{1}=\varphi_{2}$.
%	
%	\item[\rm ii)] Another consequence of Theorem \ref{similar_basic_tuples} is  the unicity in Corollary \ref{cor:forward 1generator}. One has that if $(\fN_{1}, U, A_{1}, \varphi_{1})$ and $(\fN_{2}, U, A_{2}, \varphi_{2} )$ are two basic frame-tuples that are similar to $(\HH, T,L, w)$, then by transitivity the basic tuples are similar, and therefore they coincide by Theorem \ref{similar_basic_tuples}.
%
%	\end{enumerate}
%
%	
%\end{remark}

%The following is the main theorem of this section and its proof relies on Theorem \ref{similar_basic_tuples}.	
Finally, we prove the main results of this section.
	
%\begin{theorem}\label{characterization_vectors}
%	Let $\HH$ be a Hilbert space, $T,L\in\cB(\HH)$ such that $T$ is invertible and $LT=TL$ and consider the set 
%	$$\cV:= \left\{ v\in\HH: (\HH, T, L, v) \text{ is a unilateral frame-tuple}\right\}.$$
%	Assume that $w\in\cV.$ Then, $v\in\cV$ if and only if $(\HH, T, L, v)$ is similar to $(\HH, T, L, w)$,
%	i.e. $$\cV= \left\{ Bw: B \in \cB(\HH),\,B \text{ is invertible and commutes with } T \text{ and }  L \right\}.$$
%\end{theorem}

	\begin{proof}[Proof of Theorem \ref{characterization_vectors}]
	Observe that if $v\in \HH$ is such that  $(\HH,T,L,v)$ is similar to $(\HH,T,L,w)$, then by Lemma \ref{similar_frames}, $(\HH,T,L,v)$ is a frame-tuple. Hence $v\in\cV$. 
		
%\diana{Observe first that $(\HH,T,L,w)$ is similar to $(\HH,T,L,v)$ for some $v\in\HH$. Then, by Lemma \ref{similar_frames} we have that $(\HH,T,L,v)$ is a frame-tuple, and hence $v\in\cV$.} \vicky{Observa that if $v\in \HH$ is such that  $(\HH,T,L,v)$ is similar to $(\HH,T,L,w)$, then by Lemma \ref{similar_frames}, $(\HH,T,L,v)$ is a frame-tuple. Hence $v\in\cV$. }

It remains to show that if $v\in\cV$, then it can be defined a linear bounded operator $B:\HH\to\HH$ with $BT=TB$ and $BL=LB$ such that $v=Bw$.

Since $w$, $v\in \cV$, $(\HH, T,L,w)$ and $(\HH,T,L,v)$ are unilateral frame-tuples. Therefore, by Corollary \ref{cor:forward 1generator}, there exist two basic tuples $(\fN_{1}, U, A_{1},\varphi_{1})$ and $(\fN_{2},U,A_{2}, \varphi_{2})$ that are similar to $(\HH, T,L,w)$ and $(\HH,T,L,v)$ respectively. Recall that the isomorphism $C_{i}\in\cB(\cN_i,\HH)$ for $i=1,2$ given by the similarity relation satisfies that
		\begin{equation}\label{intertwining_C0}
			C_{1}A_{1} = LC_{1}, \quad C_{1} U|_{\cN_{1}} = TC_{1} \quad \text{and} \quad C_{1} \varphi_{1}=w
		\end{equation} 
	and
		\begin{equation}\label{intertwining_C}
			C_{2} A_{2} = L C_{2}, \quad C_{2} U|_{\cN_{2}} = T C_{2} \quad \text{and} \quad C_{2} \varphi_{2} = v.
		\end{equation}
				
%Let $\cT:\HH\rightarrow \cT(\HH)\subseteq L^{2}(\TT,H^{2})$ be a surjective linear isometry as in Proposition \ref{isometry_BI}. By Remark \ref{remark_BI} we have that for $i=1,2$, there exists $\widehat{F}_{i} \in \widehat{\cF}$ defined as $\widehat{F}_{i} := \cT C_{i}$ on $\cN_{i}$ and $\widehat{F}_{i} := 0$ on $\cN_{i}^{\perp}$. Moreover, $\widehat{F}_{i}|_{\fN_{i}} : \fN_{i} \rightarrow \cT(\HH)$ is an isomorphism for $i=1,2$.

%Now, observe that $\cT(\HH)$ is a reducing subspace for $U$ since $U \cT=\cT T$ and $U^{*} \cT  = \cT T^{*} $, then it has a measurable range function, say $K$. Using \ref{lem:restrictionF_isom} of Lemma \ref{lem:prop_hatF}, we have that $F_{i}(\la)|_{J_{i}(\la)}: J_{i}(\la) \rightarrow K(\la)$ is an isomorphism for a.e. $\la \in \TT$. 

Now, from equations \eqref{intertwining_C0} and \eqref{intertwining_C} we have that 
\begin{equation}
	C_{1} A_{1} C_{1}^{-1} = C_{2} A_{2} C_{2}^{-1},
	\quad C_{1} U|_{\cN_{1}} C_{1}^{-1} = C_{2} U|_{\cN_{2}} C_{2}^{-1}. 
\end{equation}
% Thus, applying $\cT$ at the left and $\cT^{-1}$ at the right at both sides of the equations above and writing in terms of $\widehat{F}_{1}$ and $\widehat{F}_{2}$ we get that
%\begin{equation*}
%	\cT C_{1} A_{1}C_{1}^{-1} \cT^{-1} = \cT C_{2} A_{2} C_{2}^{-1}\cT^{-1}.
%\end{equation*}
%%	\begin{equation}
%	\widehat{F}_{1}|_{\fN_{1}} A_{1} \widehat{F}_{1}^{-1}|_{\fN_{1}} = \widehat{F}_{2}|_{\fN_{2}} A_{2} \widehat{F}_{2}^{-1}|_{\fN_{2}},\quad
%	\widehat{F}_{1}|_{\fN_{1}} U \widehat{F}_{1}^{-1}|_{\fN_{1}} = \widehat{F}_{2}|_{\fN_{2}} U \widehat{F}_{2}^{-1}|_{\fN_{2}},
%	\end{equation}
which is equivalent to 
$$\Psi A_{1} = A_{2} \Psi,\quad
\Psi U|_{\cN_{1}} =  U|_{\cN_{2}} \Psi$$
where $\Psi = C_{2}^{-1} C_{1}: \cN_{1}\to \cN_{2}$. Since $\Psi$ is an isomorphism, Theorem \ref{similar_basic_tuples} gives that $\cN_{1}=\cN_{2}$, $A_{1}=A_{2}$ and consequently $\varphi_{1} = \varphi_{2}$.
	
	Now, let $B:\HH\rightarrow \HH$ be defined as $B:= C_{2} C_{1}^{-1}$. Observe that $B$ is bounded, invertible and 
$$v=C_{2}\varphi_{2} = C_{2}C_{1}^{-1} C_{1} \varphi_{1} = B w.$$
 Moreover, 
 	$$BL
	= C_{2} C_{1}^{-1} L = C_{2} A_{1} C_{1}^{-1}
	= C_{2} A_{2} C_{1}^{-1}
		= L C_{2} C_{1}^{-1}
		= LB,$$
		and then, $B$ and $L$ commute. 
		Analogously, it can be seen that $BT=TB$. This completes the proof.
\end{proof}		
	
	%%%%%%%%%%%%%%%%%
	
%\subsection{Single generators for  integer iterations of $L$.}
%
%Our  following theorem is the analogous to Theorem \ref{characterization_vectors}. Define the set $$\cV_{\Z}: = \left\{v\in\HH: (\HH,T,L,v) \text{ is a frame-tuple} \right\}.$$
%
%\begin{theorem}\label{characterization_vectors}
%Let $\HH$ be a Hilbert space, $L,T\in\cB(\HH)$ with $L$ invertible and $T$ unitary such that $LT=TL$ and $w \in \cV_{\Z}$. Then
%$$ \cV_{\Z} = \left\{Bw : B\in\cB(\HH) \text{ invertible, } BL=LB \text{ and } BT=TB \right\}.$$
%\end{theorem}

%\begin{proof}

%In a similar way as in the proof of Theorem \ref{characterization_vectors}, it is shown that every vector of the form $Bw$ with $B$ an invertible operator that commutes with $T$ and $L$, generates a frame of iterations by $T$ and $L$. 
%This means that $$\left\{Bw : B\in\cB(\HH) \text{ invertible, } BL=LB \text{ and } BT=TB \right\} \subseteq \cV_{\Z}.$$	
The same characterization given in Theorem \ref{characterization_vectors} can be obtained when we consider bilateral frame-tuples. More precisely, we have: 

\begin{theorem}\label{chracterization_U1U2}
Let $\HH$ be a Hilbert space, $T,L\in\cB(\HH)$ such that $LT=TL$ and consider the set 
$$\cV:= \left\{ v\in\HH: (\HH, T, L, v) \text{ is a bilateral frame-tuple} \right\}.$$
Assume that $w\in\cV.$ Then, $v\in\cV$ if and only if $(\HH, T, L, v)$ is similar to $(\HH, T, L, w)$,
i.e. $$\cV= \left\{ Bw: B \in \cB(\HH),\,B \text{ is invertible and commutes with } T \text{ and }  L \right\}.$$
%\diana{Assume that $\cV$ is not empty, i.e., there exists some $w\in\cV$ such that $ (\HH, T, L, w)$ is a bilateral frame-tuple.  Then, $v\in\cV$ if and only if $(\HH, T, L, w)$ is similar to $(\HH, T, L, v)$.}
%\carlos{Assume that $w\in\cV.$ Then, $v\in\cV$ if and only if $(\HH, T, L, w)$ is similar to $(\HH, T, L, v)$.}
%\vicky{Assume that $w\in\cV.$ Then, $v\in\cV$ if and only if $(\HH, T, L, v)$ is similar to $(\HH, T, L, w)$.}
\end{theorem}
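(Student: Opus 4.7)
The plan is to follow the same outline as the proof of Theorem~\ref{characterization_vectors}, replacing the unilateral rigidity result (Theorem~\ref{similar_basic_tuples}) with its bilateral analogue. The forward implication is immediate from Lemma~\ref{similar_frames}. For the converse, suppose $v,w\in\cV$. By Corollary~\ref{cor:bilateral 1generator}, there exist bilateral basic tuples $(\cN_i,U_1|_{\cN_i},U_2|_{\cN_i},P_{\cN_i}1)$ in $L^{2}(\TT^{2})$ and bounded isomorphisms $C_i:\cN_i\to\HH$ ($i=1,2$) satisfying $TC_i=C_iU_1|_{\cN_i}$, $LC_i=C_iU_2|_{\cN_i}$, together with $C_1P_{\cN_1}1=w$ and $C_2P_{\cN_2}1=v$. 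Setting $\Psi:=C_2^{-1}C_1$ yields a bounded isomorphism $\Psi:\cN_1\to\cN_2$ intertwining $U_1$ and $U_2$ simultaneously.

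The crux is therefore to prove the following bilateral rigidity statement: if $\cN_1,\cN_2\subseteq L^{2}(\TT^{2})$ are reducing for both $U_1$ and $U_2$, and there is an isomorphism $\Psi:\cN_1\to\cN_2$ intertwining $U_i|_{\cN_1}$ with $U_i|_{\cN_2}$ for $i=1,2$, then $\cN_1=\cN_2$. This should be considerably simpler than its unilateral counterpart because both operators are now unitary. Extending $\Psi$ by zero to $L^{2}(\TT^{2})$ gives an operator $\widetilde\Psi$ which, because $\cN_1$ and $\cN_1^\perp$ are both reducing for $U_1$ and $U_2$, commutes with both $U_1$ and $U_2$. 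The joint commutant of $M_{z_1}$ and $M_{z_2}$ in $\cB(L^{2}(\TT^{2}))$ is precisely the algebra of multiplication operators $\{M_g:g\in L^{\infty}(\TT^{2})\}$, so $\widetilde\Psi=M_g$ for some $g\in L^{\infty}(\TT^{2})$.

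By the structure theorem for doubly $U_1,U_2$-reducing subspaces of $L^{2}(\TT^{2})$ (that is, for multiplication-invariant subspaces with respect to the determining set $\{z_1^k z_2^j:k,j\in\Z\}$, as in \cite{BI,BR}), each $\cN_i$ equals $L^{2}(E_i)$ for some measurable $E_i\subseteq\TT^{2}$. The vanishing of $\widetilde\Psi$ on $\cN_1^\perp=L^{2}(\TT^{2}\setminus E_1)$ forces $g=0$ a.e.\ outside $E_1$, while the inclusion $\widetilde\Psi(\cN_1)\subseteq\cN_2$ forces $g=0$ a.e.\ on $E_1\setminus E_2$. Since $M_g:L^{2}(E_1)\to L^{2}(E_2)$ is a bounded bijection, $g$ must be nonzero a.e.\ on $E_1$ with essentially bounded reciprocal; hence $|E_1\setminus E_2|=0$, and applying the same argument to $\Psi^{-1}$ yields $|E_2\setminus E_1|=0$. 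Thus $\cN_1=\cN_2$, and in particular $P_{\cN_1}1=P_{\cN_2}1$.

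With the rigidity established, we close the argument exactly as in the unilateral case: the operator $B:=C_2C_1^{-1}\in\cB(\HH)$ is a bounded isomorphism with $Bw=C_2C_1^{-1}C_1P_{\cN_1}1=C_2P_{\cN_2}1=v$, and the intertwining relations give $BT=C_2U_1|_{\cN_2}C_1^{-1}=TC_2C_1^{-1}=TB$ and $BL=LB$ by the analogous computation. This exhibits the desired similarity between $(\HH,T,L,w)$ and $(\HH,T,L,v)$. The main technical obstacle is the rigidity of bilateral basic tuples; but in contrast with the unilateral case, which relied on quasi-similarity of $C_0$ contractions and the theory of minimal functions from Sz.-Nagy--Foias theory, the bilateral argument reduces to the classical identification of the commutant of two commuting unitaries as multiplication by $L^{\infty}$ functions, together with the already-available characterization of doubly reducing subspaces from \cite{BI,BR}.
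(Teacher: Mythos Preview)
Your proof is correct and follows the same overall scheme as the paper: reduce to bilateral basic tuples via Corollary~\ref{cor:bilateral 1generator}, identify each $\cN_i$ with $\cX_{E_i}L^2(\TT^2)$ for a measurable $E_i\subseteq\TT^2$, prove $E_1=E_2$ up to a null set, and then define $B:=C_2C_1^{-1}$ exactly as in Theorem~\ref{characterization_vectors}. The one genuine difference is in the rigidity step. You extend the intertwiner $\Psi$ by zero, invoke the fact that the joint commutant of $U_1,U_2$ on $L^2(\TT^2)$ is $\{M_g:g\in L^\infty(\TT^2)\}$, and then read off $|E_1\setminus E_2|=0$ from the support constraints that bijectivity imposes on $g$. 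The paper does not pass through the commutant theorem: it takes the specific test function $g=\cX_{E_1\setminus E_2}$, expands it in the Fourier basis $\{z_1^kz_2^j\}$, and uses the intertwining relations term-by-term to rewrite $Vg$ as the pointwise product $g\cdot h$ with $h=V(P_{\cM}1)\in\cN$ supported in $E_2$, whence $Vg=0$ and $g=0$. Both arguments encode the same phenomenon---the intertwiner necessarily acts as a pointwise multiplication---but yours packages it structurally via the commutant, while the paper's is a direct, self-contained Fourier computation that avoids citing that result. A minor remark: the paper records the characterization of doubly reducing subspaces of $L^2(\TT^2)$ as Proposition~\ref{M_reducingU1U2}, attributed to \cite{GM}, rather than appealing to \cite{BI,BR}.
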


 The proof of Theorem \ref{chracterization_U1U2} is based in Corollary   \ref{cor:bilateral 1generator} and the following 
 proposition that characterize reducing subspaces of $L^2(\TT^2)$, and whose proof can be found in \cite{GM}.

\begin{proposition}\label{M_reducingU1U2}
	A subspace $\mathcal{M}\subset L^{2}(\TT^{2})$ is reducing for $U_{i}$, $i=1,2$ if and only if there exists a Borel set $E \subseteq \TT^{2}$ such that $\cM = \cX_{E} L^{2}(\TT^{2})$.
\end{proposition}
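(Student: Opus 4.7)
The plan is to characterize the reducing subspaces via the commutant of the multiplication algebra generated by $U_1$ and $U_2$. The "if" direction is immediate: if $\cM=\cX_E L^2(\TT^2)$ for a Borel set $E\subseteq \TT^2$, then pointwise multiplication by $z_1$ or $z_2$ preserves the support condition $f=f\cX_E$, and the same holds for multiplication by $\bar z_1,\bar z_2$, so $\cM$ is invariant under $U_1, U_2, U_1^*, U_2^*$ and hence reducing for both.

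For the "only if" direction, I would exploit the fact that $\cM$ is reducing for $U_i$ precisely when $P_\cM$ commutes with $U_i$. The key step is to identify the joint commutant $\{U_1,U_2\}'$ in $\cB(L^2(\TT^2))$ as the algebra $\{M_\psi:\psi\in L^\infty(\TT^2)\}$ of multiplication operators. One natural route is to view $L^2(\TT^2)\cong L^2(\TT,L^2(\TT))$ by Fubini, identify $U_1$ with the bilateral shift $U$ on this vector-valued space, and apply the result cited after Theorem \ref{thm:MI-spaces} (\cite[Corollary 3.19]{RR}) to obtain that any operator commuting with $U_1$ has the form $\widehat F$ for some measurable $F:\TT\to \cB(L^2(\TT))$ with $\|F\|_\infty<\infty$. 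The additional requirement that such an $\widehat F$ commute with $U_2$ forces, for a.e.\ $\lambda_1\in\TT$, the fiber operator $F(\lambda_1)$ to commute with the bilateral shift on $L^2(\TT)$; applying the same commutant description once more in the scalar case shows $F(\lambda_1)=M_{\psi(\lambda_1,\cdot)}$ for some $\psi(\lambda_1,\cdot)\in L^\infty(\TT)$, and assembling the fibers yields $\widehat F=M_\psi$ for some $\psi\in L^\infty(\TT^2)$.

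Applying this to the self-adjoint projection $P_\cM$, we obtain $P_\cM=M_\psi$ with $\psi\in L^\infty(\TT^2)$. Self-adjointness forces $\psi=\overline{\psi}$ a.e.\ and idempotence forces $\psi^2=\psi$ a.e., so $\psi\in\{0,1\}$ almost everywhere. Setting $E:=\{(z_1,z_2)\in\TT^2:\psi(z_1,z_2)=1\}$, which is a Borel set because $\psi$ can be chosen Borel measurable, we conclude $P_\cM=M_{\cX_E}$ and hence $\cM=\operatorname{ran}(P_\cM)=\cX_E L^2(\TT^2)$.

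The main obstacle is the commutant identification in the two-parameter setting, since the excerpt only records the one-variable statement. I would either prove it by the iterated Helson/Bownik--Ross argument outlined above (noting that measurability of the range function $\lambda_1\mapsto J(\lambda_1)$ combined with the scalar-valued Helson theorem applied fiberwise yields a jointly Borel set $E$), or cite a standard reference establishing that $\{U_1,U_2\}'=L^\infty(\TT^2)$ acting by multiplication, e.g.\ via the fact that the $\ast$-algebra generated by $U_1,U_2$ is weak-operator dense in $L^\infty(\TT^2)$ and this abelian von Neumann algebra is maximal abelian in $\cB(L^2(\TT^2))$.
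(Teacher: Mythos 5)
Your proof is correct, but note that the paper does not actually prove this proposition: it is stated with a pointer to \cite{GM}, so there is no internal argument to compare against and your write-up supplies a proof where the paper only supplies a citation. The route you take is the standard one and it works: the ``if'' direction is immediate; for ``only if'' you use that $\cM$ is reducing for $U_1,U_2$ iff $P_{\cM}\in\{U_1,U_2\}'$ (and, since $P_{\cM}$ is self-adjoint, it then automatically commutes with $U_1^*,U_2^*$ as well, so it lies in the commutant of the von Neumann algebra generated by $U_1,U_2$, namely the maximal abelian algebra $L^\infty(\TT^2)$ acting by multiplication), and a self-adjoint idempotent multiplier is multiplication by $\cX_E$ for a Borel representative $E$. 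Of your two suggested ways to identify the joint commutant, the maximal-abelian/von Neumann argument is the cleaner one and requires nothing beyond Stone--Weierstrass density of trigonometric polynomials; the iterated Helson--Bownik--Ross route also works, and you correctly flag its one delicate point, namely that $\widehat F U_2=U_2\widehat F$ yields $F(\lambda_1)M_{z_2}=M_{z_2}F(\lambda_1)$ only for a.e.\ $\lambda_1$ --- this follows because $U_2$ is itself decomposable with constant fiber $M_{z_2}$ and $F\mapsto\widehat F$ is an algebra isomorphism, so the commutator is $\widehat G$ with $G(\lambda_1)=[F(\lambda_1),M_{z_2}]$ and $\widehat G=0$ forces $G=0$ a.e. Joint measurability of the assembled $\psi$ is obtained, as you say, from $\psi(\lambda_1,\cdot)=F(\lambda_1)1$. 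In short, the proposal is a complete and correct substitute for the external reference.
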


\begin{proof}[Proof of Theorem \ref{chracterization_U1U2}]
 Let $w\in \cV$. Since $(\HH,T,L,w)$  and $(\HH,T,L,v)$ are bilateral frame-tuples, then by Corollary \ref{cor:bilateral 1generator} they are similar to basic frame-tuples $(\cM, U_{1}|_{\cM},U_{2}|_{\cM}, P_{\cM} 1)$ and $(\fN, U_{1}|_{\fN},U_{2}|_{\fN}, P_{\fN} 1)$ respectively. 

Analogously as in the proof of the case of unilateral iterations, the key is to show that the subspaces $\cM$ and $\fN$ are equal.

From the isomorphisms given by the similarity relations, we can define an isomorphism $V:\cM \rightarrow \fN$ satisfying
\begin{equation}\label{V_intertwines}
	V\,U_{1}|_{\cM} =  U_{1}|_{\fN} V \quad \text{and} \quad V\,U_{2}|_{\cM} =  U_{2}|_{\fN} V.
\end{equation}

Since $\cM$ and $\cN$ are reducing for $U_{1}$ and $U_{2}$, by Proposition \ref{M_reducingU1U2} we have that there exist Borel sets $E_{1}, E_{2} \subseteq \TT^{2}$ such that $\cM = \cX_{E_{1}} L^{2}(\TT^{2})$ and $\fN= \cX_{E_{2}} L^{2}(\TT^{2})$. 

In order to show that $\cM=\fN$, we will see that the characteristic functions $\cX_{E_{1}}$ and $\cX_{E_{2}}$ coincide a.e. on $\TT^{2}$.  To do this, take $g=\cX_{E_{1} \setminus E_{2}} \in \cM \subset L^{2}(\TT^{2})$. Since $\{U_1^kU_2^j1\,:\, k,j\in\Z\}$ is an orthonormal basis of $L^2(\TT^2)$ we have the expansion
$$g= \sum_{k\in\Z}\sum_{j\in\Z} \beta_{kj} U_1^{k} U_2^{j} 1,$$ 
whose coefficients satisfy $\{\beta_{kj}\}\in\ell^{2}(\Z^{2})$. By applying the isomorphism $V$ and using the relations in \eqref{V_intertwines} we get that
\begin{align*}
	Vg= VP_{\cM}g
	&= V \left(  \sum_{k\in\Z}\sum_{j\in\Z} \beta_{kj} \, U_{1}^{k} \,U_{2}^{j} P_{\cM}1 \right) 
	%=  \sum_{k\in\Z}\sum_{j\in\Z} \beta_{kj} V  \,U_{1}^{k}|_{\cM}  \,U_{2}^{j}|_{\cM}(P_{\cM}1)
	= \sum_{k\in\Z}\sum_{j\in\Z} \beta_{kj} \, U_{1}^{k}|_{\fN}  \, U_{2}^{j}|_{\fN} V P_{\fN}1.
	%&= P_{\tilde{\fN}} \left(\sum_{k\in\Z}\sum_{j\in\Z} \beta_{kj} \, \cU_{1}^{k}  \,\cU_{2}^{j} e_{00} \right) V P_{\fN} e_{00}  \\
	%&=(P_{\tilde{\fN}}g) (V P_{\fN} e_{00}) = 0.
\end{align*}

Let $h=V P_{\cN} 1$ and observe that $\supp(g) \subseteq E_{1} \setminus E_{2}$ and $\supp(h)\subseteq E_{2}$. Then, 
%by evaluating in $(z_{1},z_{2}) \in\TT^{2}$ 
it is obtained
$$Vg = gh = 0.$$
%$$Vg(z_{1},z_{2}) = \sum_{k\in\Z}\sum_{j\in\Z} \beta_{kj} z_{1}^{k}  z_{2}^{j} \,h(z_{1},z_{2})
%= g(z_{1},z_{2}) \,h(z_{1},z_{2}) = 0$$
%for a.e. $(z_{1},z_{2})\in\TT^{2}$. 
Since $V$ is an isomorphism, we conclude that $g=0$. A similar argument can be used to show that $\cX_{E_{2} \setminus E_{1}} = 0$. Thus, $| E_{2} \setminus E_{1} |=0= |E_{1} \setminus E_{2} |$, and therefore $\cX_{E_{1}}$ coincide with $\cX_{E_{2}}$ a.e. $(z_{1},z_{2})\in \TT^{2}$. As a consequence, $\cM=\fN$.

The proof concludes in the same manner of Theorem \ref{characterization_vectors}.

\end{proof}

\begin{remark}\label{rem:final-remark}\

\begin{enumerate}[(a)]

\item\label{ex:more-classes} It is not true that in the case of more than one  generator there exists only one equivalent class of vectors $\{v_{i}\} \in \cV_{n}$ as we show next. Suppose that $\cV_n$ is not empty for some $n>1$. 
	Let $\{v_i\}_{i\in I } \in \cV_n$, that is, $(\HH,T,L,\{v_i\}_{i\in I})$ is a unilateral frame-tuple, and assume that $\{v_1, v_2\}$ is linearly independent. Then, it is easy to check that $\{v_i\}_{i\in I}\cup \{v_1+v_2\}$ and $\{v_i\}_{i\in I}\cup \{v_1-v_2\}$ belong to $\cV_{n+1}$. Suppose that the corresponding tuples $(\HH,T,L,\{v_i\}_{i\in I}\cup \{v_1+v_2\})$ and $(\HH,T,L,\{v_i\}_{i\in I}\cup \{v_1-v_2\})$ are similar via an isomorphism $C:\HH\to\HH$. Then, in particular, it must hold that 
	$$C(v_1)=v_1,\; C(v_2)=v_2,\;\dots,\,C(v_1+v_2)=v_1-v_2$$
	which is not possible.

\item As we already mentioned, if we fix the Hilbert space $\HH$ and the operator $T$ and $L$ in $\mathcal B(\HH)$,  Theorems  \ref{characterization_vectors} and  \ref{chracterization_U1U2} 
say that all the frame-tuples with a single generator are similar (when there is at least one). Before, we saw that this is no longer true for more generators. At this point one could ask if there is some relation between $\#I$ and the number of equivalent classes of the relation defined at the beginning of this section. What we know is that if $\#I=1$ and $\mathcal V_1\neq\emptyset$, there is only one equivalence class. But a slight modification in part \ref{ex:more-classes} shows that for $\#I=n>1$ there are infinitely many equivalence classes. Indeed, suppose that $n>1$ and that  $\cV_n\neq\emptyset$. Consider   $\{v_i\}_{i\in I } \in \cV_n$  and assume that $\{v_1, v_2\}$ is linearly independent. Then, for $a,b\in\mathbb C\setminus\{0\}$, we have that $\{v_i\}_{i\in I}\cup \{av_1+bv_2\}\in\cV_{n+1}$. As in \ref{ex:more-classes}, it can be seen that each generator set $\{v_i\}_{i\in I}\cup \{av_1+bv_2\}$ belongs to a different equivalence class. Then, there are infinitely many when we have  3 generators or more. The same idea works for 2 generators, since if $v\in\cV_1$, $\{v,av\}\in\cV_2$ for every $a\in\mathbb C\setminus\{0\}$ and they all belong to different equivalence classes. 

\end{enumerate}
\end{remark}


\begin{thebibliography}{1}

\bibitem{ACCP-ADV}
	A. Aguilera, C. Cabrelli, D. Carbajal, and V. Paternostro,
	Diagonalization of shift-preserving operators.
	Adv. Math. {\bf 389} (2021), Paper No. 107892, 32 pp.

\bibitem{ACCP22} A. Aguilera, C. Cabrelli, D. Carbajal, and V. Paternostro, Reducing and invariant subspaces under two commuting shift operators, preprint (2022), arXiv:2202.10299.
	
\bibitem{ACCP21} A. Aguilera, C. Cabrelli, D. Carbajal, and V. Paternostro,  
Dynamical sampling for shift-preserving operators, Appl. Comput. Harmon. Anal., {\bf 51} (2021), 258--274.


\bibitem{ACCMP17}
A. Aldroubi, C. Cabrelli, A. F. Cakmak, U. Molter, and A. Petrosyan, Iterative actions of normal operators, J. Funct. Anal., {\bf 272}, 3 (2017), 1121--1146.

\bibitem{ACMT14}
A. Aldroubi, C. Cabrelli, U. Molter, and S. Tang, Dynamical Sampling,
Appl. Comput. Harmon. Anal., {\bf 42}, 3 (2017), 378--401.

\bibitem{ADK13}
A. Aldroubi, J. Davis, and I. Krishtal,
Dynamical sampling: time-space trade-off, Appl. Comput. Harmon. Anal., {\bf 34}, 3 (2013), 495--503.

\bibitem{ADK15}
A. Aldroubi, J. Davis, and I. Krishtal, Exact reconstruction of signals in evolutionary systems via spatiotemporal trade-off, J. Fourier Anal. Appl., {\bf 21}, 1 (2015), 11--31.

\bibitem{AHKL17} A. Aldroubi, L. Huang, I. Kristhal, and R. Lederman, Dynamical sampling with random noise. In 2017 International conference on Sampling Theory and Applications (SampTa) (2017), 409--412.

\bibitem{AK16}
A. Aldroubi and I. Krishtal, Krylov subspace methods in dynamical sampling, STSIP, Shannon Centennial Volume, {\bf 15} (2016), 9--20.

\bibitem{AKT18}
A. Aldroubi, I. Krishtal, and S. Tang., Phaseless reconstruction from space--time samples, Appl. Comput. Harmon. Anal., {\bf 48}, 1 (2020), 395--414.

\bibitem{AP17}
A. Aldroubi and A. Petrosyan, Dynamical sampling and systems from iterative actions of operators, Frames and Other Bases in Abstract and Function Spaces, Springer, 2017.


\bibitem{RH21} R. Beinert and M. Hasannasab, Phase Retrieval via polarization in dynamical sampling, In International Conference on Scale Space and Variational Methods in Computer Vision: 8th International Conference, SSVM 2021, Virtual Event (2021), 516-527.

	
%\bibitem{AF} Aubin, J.-P. and Frankowska H., Set-valued analysis, Birkh\"auser Boston-Basel-Berlin (1990).
		
\bibitem{BI} M. Bownik and J. Iverson, Multiplication-invariant operators and the classification of LCA group frames, J. Funct. Anal., {\bf 280}, 2 (2021), Paper No. 108780.
		
\bibitem{BR} M. Bownik and K. A. Ross, The Structure of Translation-Invariant Spaces on Locally Compact Abelian Groups, J. Fourier Anal. Appl., {\bf 21} (2015), 849--884.
	
		
\bibitem{CMPP17} C. Cabrelli, U. Molter, V. Paternostro, and F. Philipp, Finite sensor dynamical sampling, 2017 International Conference on Sampling Theory and Applications (SampTA), (2017), 50--54. 

\bibitem{CMPP18} C. Cabrelli, U. Molter, V. Paternostro, and F. Philipp, Dynamical sampling on finite index sets, J. Anal. Math., {\bf 140},2, (2020), 637--667.

\bibitem{CMS22} C. Cabrelli, U. Molter, and D. Suarez, Frames of iterations and vector valued model spaces, preprint (2022), arXiv:2203.01301.

\bibitem{CT21} J. Cheng, and S. Tang, . Estimate the spectrum of affine dynamical systems from partial observations of a single trajectory data. Inverse Problems, {\bf 38},1 (2021), 015004.

\bibitem{CHP} O. Christensen, M. Hasannasab, and F. Philipp, Frame properties of operator orbits, Math. Nachrichten, {\bf 293}, 1 (2019), 52-66.
		
\bibitem{GMR15} S. R. Garcia, J. Mashreghi, and W. T. Ross, Introduction to Model Spaces and their Operators, Cambridge U.P. (2015).		

\bibitem{GM} P. Gathage and V. Mandrekar, On Beurling type invariant subspaces of $L^{2}(\TT^{2})$ and their equivalence, J. Oper. Theory, {\bf 20}, 1 (1988), 83--89.

\bibitem{Ha} P. Halmos, Shifts on Hilbert spaces, Journal f{\"u}r die reine und angewandte Mathematik (Crelles Journal), {\bf 1961}, (1961), 102--112.

\bibitem{HL} D. Han and  D. R. Larson, Frames, bases and group representations, Mem. Amer. Math. Soc., {\bf 147}, 697 (2000).
		
\bibitem{He} H. Helson, Lectures on Invariant Subspaces, Academic Press, London, 1964.

\bibitem{HRV10} A. Hormati, O. Roy, Y. Lu, and M. Vetterli, Distributed sampling of signals linked by sparse filtering: Theory and applications, IEEE Trans. Signal Process. {\bf 58}
(2010) 1095--1109.

\bibitem{HNT21} L. Huang, D. Needell, and Tang, S. (2021), Robust recovery of bandlimited graph signals via randomized dynamical sampling, preprint (2021), arXiv:2109.14079.

\bibitem{LV09} Y. Lu and M. Vetterli, Spatial super-resolution of a diffusion field by temporal oversampling in sensor networks, In IEEE International Conference on Acoustics, Speech and Signal Processing (2009), 2249--2252.
		
\bibitem{MMO21} R. D. Mart\'in, I. Medri, and J. Osorio, Error analysis on the initial state reconstruction problem, preprint (2021), arXiv:2105.02015.

\bibitem{NFBK}  B. Sz. Nagy, C. Foias, H. Bercovici, and L. K\'erchy, Harmonic Analysis of Operators on Hilbert Spaces. Springer, 2010.
		
\bibitem{Phi17} F. Philipp, Bessel orbits of normal operators, J. Math. Anal. Appl., {\bf 448}, 2 (2017), 767--785.

\bibitem{RR} H. Radjavi and P. Rosenthal, Invariant Subspaces, Springer-Verlag, Berlin, 1973.

\bibitem{RCLV09} J. Ranieri, A. Chebira, Y.M. Lu, and M. Vetterli, Sampling and reconstructing diffusion fields with localized sources, In IEEE International Conference on
Acoustics, Speech and Signal Processing (ICASSP) (2011), 4016--4019.

\bibitem{Tan17} S. Tang, System identification in dynamical sampling, Adv. Comput. Math., {\bf 43}, 3 (2017), 555--580.



%%%%%%%%%%%%%%%%%%%







%%%%%%%%%%%%%%%%%%%		
	\end{thebibliography}
\end{document}